\newcommand{\calN}{\mathcal{N}}
\newcommand{\bs}[1]{\boldsymbol{#1}}
\DeclareMathOperator{\sinc}{sinc}
\newcommand{\naive}{naive}
\algrenewcommand\algorithmicrequire{\textbf{Precondition:}}
\algrenewcommand\algorithmicensure{\textbf{Postcondition:}}
\title{Model reduction for fractional elliptic problems using Kato's formula}
\thanks{H.~Antil was partially supported by National Science Foundation awards DMS-1818772 and DMS-1521590 and Air Force Office of Scientific Research (AFOSR) under Award NO: FA9550-19-1-0036. Y.~Chen's research was supported by NSF awards DMS-1719698 and AFOSR award FA9550-18-1-0383. E.~Cherkaev was supported by NSF awards DMS-0940249 and DMS-1413454. A.~Narayan was partially supported by NSF award DMS-1720416, AFOSR award FA9550-15-1-0467, and DARPA EQUiPS contract N660011524053.}
\author{Huy Dinh \and Harbir Antil \and Yanlai Chen \and Elena Cherkaev \and Akil Narayan}
\begin{document}
\maketitle

\begin{abstract}
  We propose a novel numerical algorithm utilizing model reduction for computing solutions to stationary partial differential equations involving the spectral fractional Laplacian. Our approach utilizes a known characterization of the solution in terms of an integral of solutions to classical elliptic problems. We reformulate this integral into an expression whose continuous and discrete formulations are stable; the discrete formulations are stable independent of all discretization parameters. We subsequently apply the reduced basis method to accomplish model order reduction for the integrand. Our choice of quadrature in discretization of the integral is a global Gaussian quadrature rule that we observe is more efficient than previously proposed quadrature rules. Finally, the model reduction approach enables one to compute solutions to multi-query fractional Laplace problems with order of magnitude less cost than a traditional solver.
\end{abstract}

\section{Introduction}
Differential equations involving fractional derivative powers have gained in popularity in recent years. 
These non-classical differential equations have shown potential to model nonlocal and time-delay effects, making them good candidates for modeling hysteric and globally-coupled phenomena. 
For example, fractional differential equations have recently been used to model fluid mechanics, arterial blood flow, cardiac ischemia, and are been used as ingredients in image denoising and image segmentation \cite{perdikaris_fractional-order_2014,kumar_fractional_2015,antil_sobolev_2018,farquhar_computational_2018,HAntil_SBartels_2017a,HAntil_SBartels_GDogan_2019a}. Fractional PDEs have shown tremendous potential to model applications in geophysics \cite{CWeiss_BvBWaanders_HAntil_2018a} and manifold learning \cite{antil2018fractional}. Finally, we mention the novel optimal control concepts introduced by fractional equations \cite{HAntil_RKhatri_MWarma_2018a}, see also \cite{HAntil_MWarma_2018a, HAntil_MWarma_2018b, HAntil_MWarma_2019a}.
In this paper we focus on fractional elliptic operators, defined via spectral expansion; a prototypical example of an operator that we use throughout this paper is the fractional Laplacian.

With $\Delta$ the classical Laplacian on a physical domain $\Omega$, we are interested in computing the solution $u$ to the partial differential equation
\begin{align*}
  (-\Delta)^s u = f,\hskip 20pt & \hskip 20pt x \in \Omega, 
\end{align*}
with appropriate boundary conditions on $\partial \Omega$, where $s \in (0,1)$ is the fractional order. 
The precise definition of the fractional operator $(-\Delta)^s$ involves the spectral expansion of the classical operator $-\Delta$, which we more formally describe in Section \ref{sec:notation}. 
There are already several numerical algorithms for computing the solution to such an equation: 
\begin{itemize}
  \item Perhaps the most conceptually straightforward idea is to use the spectral expansion definition of $(-\Delta)^s$ to devise a scheme that computes solutions using the spectral expansion of the associated discretized operators \cite{ilic_numerical_2005,ilic_numerical_2006,yang_novel_2011,song_computing_2017}. 
The disadvantage of this approach is that the procedure is expensive, requiring a full eigendecomposition of a potentially very large matrix. In addition, it is difficult to certify error using this approach.
  \item A second approach uses an extension procedure to write the non-classical $d$-dimensional PDE as a $(d+1)$-dimensional classical PDE \cite{molchanov_symmetric_1969,caffarelli_extension_2007,stinga_extension_2010}. This latter PDE can be solved with existing methods, although some nontrivial tailoring of existing numerical methods is needed \cite{nochetto_pde_2014,meidner_$hp$-finite_2017,ainsworth_hybrid_2018}. The challenge with this approach is that the spatial dimension is increased, and the extended PDE is degenerate, requiring specialized numerical methods.
  \item A final approach that we use as the starting point for the method proposed in this paper is an integral operator approach, which writes the solution as a type of Dunford-Taylor integral involving the resolvent of the classical operator see \cite[Theorem 2 with simplification $\lambda = 0$]{Kato_60}, see also \cite{bonito_numerical_2015,antil_short_2017}. 
    This approach discretizes the integral formulation with quadrature and requires several classical PDE solves (equal to the number of quadrature points) in order to compute the solution to the fractional problem. However, this results in an algorithm that can use several queries of an existing PDE solver to compute the solution to the fractional problem. The challenge with this approach is that $\mathcal{O}(100)$ classical PDE solves may be necessary to ensure accuracy for a single solution of the fractional PDE, making this approach quite expensive compared to traditional solvers. For certain operators these classical PDE solves may be accomplished in parallel, but this does not diminish the overall cost. Other operators require coupling of these solves, making parallel approaches more difficult \cite{weiss_fractional_2019}.
\end{itemize}
In this paper, we develop a novel model reduction algorithm for the third approach listed above to substantially alleviate the cost of traditional PDE solves. We concentrate on this approach for the spectral definition of the fractional Laplacian in this paper, but due to a similar integral formulation for the integral fractional Laplacian \cite{bonito_numerical_2019}, our approach would extend to more general cases as well. Our contributions in this article are as follows:
\begin{itemize}
  \item We provide a rearrangement of the Dunford-Taylor integral considered in \cite{bonito_numerical_2015} that improves numerical stability. We first show that the analytical solution has $s$-independent $L^2$ stability bounds. This stability extends to the numerical discretization, independent of all discretization parameters. See Lemma \ref{lemma:u-partition} and Proposition \ref{prop:u-stability}. 
  \item Our approach to discretize the Dunford-Taylor integral is a novel application of a global Gaussian quadrature rule. 
    Our numerical results suggest that our quadrature choice is more efficient than previously proposed choices, cf. Figure \ref{fig:quadplot}. 
    We cannot provide an analytical error bound in terms of the number of quadrature points, but we do provide a rigorous, computable error certificate; see Proposition \ref{prop:qerror}.  
    Previous work has required a number of quadrature points proportional to $\max(1/s,1/(1-s))$ in order to obtain a specified level of accuracy. 
    Our empirical results suggest that our approach also suffers from this limitation Figure \ref{fig:Mvals}.
  \item We employ the reduced basis method (RBM) to effect model reduction which, after a single \textit{offline} computational investment, can accelerate subsequent computations of $(s,f) \mapsto u$ by at least two orders of magnitude. The offline portion of this algorithm requires approximately as much time as a single $(s,f) \mapsto u$ solve using the traditional Dunford-Taylor approach; see Algorithm \ref{ssec:gq:algorithm}.
  \item We provide a rigorous \textit{a posteriori} error estimate for our solution computed via model reduction. This error estimate is computed as a by-product of the offline investment, and is therefore directly available; see Theorem \ref{thm:u-certificate}. 
\end{itemize}
We remark that while we study PDEs with an operator of the form $(-\Delta)^s$, all our results extend to more general fractional elliptic operators. See Remarks \ref{rem:general-a} and \ref{rem:general-b}.

This paper is not the first strategy for model reduction for fractional elliptic problems. The authors in \cite{antil_certified_2018} provide a model reduction strategy, applied to the second (extension) approach listed above. More recently, the work in \cite{danczul_reduced_2019} employs a reduced basis approach by interpolating operator norms. However, low-rank structure in solution sets to fractional problems has been empirically noted even earlier \cite{witman_reduced-order_2016}. For problems involving nonlocal integral kernels, the authors in \cite{guan_reduced_2017} also proposed a reduced basis approach, but use a different strategy to perform model reduction.

This paper is structured as follows. Section \ref{sec:notation} lays out our notation and describes the problem. Section \ref{sec:gq} describes a new algorithm for expressing and computing the Dunford-Taylor solution that was first proposed in \cite{bonito_numerical_2015}. Section \ref{sec:rbm} utilizes RBM to propose a new model reduction algorithm that computationally accelerates the algorithm from Section \ref{sec:gq} and provides a computable error certificate for the model reduction. Finally, section \ref{sec:results} demonstrates our new algorithms on a two-dimensional fractional Laplace problem and compares our algorithm against the predecessor in \cite{bonito_numerical_2015}.

\section{Notation and setup}\label{sec:notation}
Vectors will be denoted in lowercase bold, and matrices in uppercase bold, e.g., $\bs{x}$ and $\bs{A}$, respectively. If $\bs{M}$ is a symmetric positive definite matrix, we define
\begin{align*}
  \| \bs{x} \|^2_{\bs{M}} &\coloneqq \bs{x}^T \bs{M} \bs{x}, 
\end{align*}
and $\|\bs{x}\|$ is the standard Euclidean norm. The matrix norm $\|\bs{A}\|$ is the standard induced $\ell^2$ norm on matrices. If $\bs{A}$ is symmetric, then $\lambda_{\mathrm{\min}}\left(\bs{A}\right)$ denotes the smallest (real) eigenvalue of $\bs{A}$. If both $\bs{A}$ and $\bs{B}$ are symmetric positive definite matrices in $\R^{N \times N}$, we define the smallest generalized eigenvalue of $(\bs{A}, \bs{B})$ as 
\begin{align*}
  \lambda_{\mathrm{min}} \left(\bs{A}, \bs{B} \right) \coloneqq \inf_{\bs{x} \in \R^N\backslash \{\bs{0}\}} \frac{\|\bs{x}\|^2_{\bs{A}}}{\|\bs{x}\|_{\bs{B}}^2}.
\end{align*}
Note that under these assumptions on $\bs{A}$ and $\bs{B}$, the above expression is equal to the smallest $\lambda$ such that $\bs{A} \bs{x} = \lambda \bs{B} \bs{x}$ has a nontrivial solution $\bs{x}$, and also we have that
\begin{align*}
  \lambda_{\mathrm{min}}\left(\bs{B}^{-1/2} \bs{A} \bs{B}^{-1/2} \right) = \lambda_{\mathrm{min}} \left(\bs{A}, \bs{B} \right),
\end{align*}
where $\bs{B}^{1/2}$ is the symmetric positive definite matrix square root of $\bs{B}$. Similarly, we use the notation $\lambda_{\mathrm{max}}(\cdot)$ and $\lambda_{\mathrm{max}}(\cdot,\cdot)$ to denote maximum eigenvalues. 

Consider a bounded domain $\Omega \subset \R^d$ with Lipschitz boundary $\partial \Omega$; we are mainly concerned with $d \leq 3$. We have
\begin{align*}
  L^2(\Omega) &\coloneqq \left\{ v: \Omega \rightarrow \R \; \big| \; \| v\|_{L^2} < \infty \right\}, & \| v\|^2_{L^2} &\coloneqq \langle v, v \rangle, & \langle v, w \rangle &\coloneqq \int_\Omega v(x) w(x) \dx{x}.
\end{align*}
We will often write $L^2 = L^2(\Omega)$, and we define $\langle \nabla w, \nabla v \rangle = \sum_{j=1}^d \langle \ppx{x_j} w, \ppx{x_j} v \rangle$, which induces a definition for the $L^2$ norm $\|\nabla v \|$ of vector-valued functions. For brevity will also write $\|v\| = \|v\|_{L^2}$ and $\|\nabla v\| = \|\nabla v\|_{L^2}$. The standard Laplace eigenvalue problem on $\Omega$ with Dirichlet boundary conditions,
\begin{align}\label{eq:eigenproblem}
  \begin{split}
    -\Delta u = \lambda u, \hskip 20pt & \hskip 20pt x \in \Omega, \\
    u(x) = 0, \hskip 20pt & \hskip 20pt x \in \partial \Omega,
  \end{split}
\end{align}
yields an infinite sequence of eigenvalues $0 < \lambda_1 \leq \lambda_2 \cdots $ with associated eigenfunctions $\{\phi_n\}_{n = 1}^\infty$. Here, and in all the following, the differential operator $\Delta$ operates on the $x$ variable. The spectral theorem ensures that the eigenfunctions enjoy $L^2(\Omega)$-orthogonality and completeness, so that 
\begin{align}\label{eq:un-expansion}
  u \in L^2(\Omega) \hskip 10pt \Longrightarrow \hskip 10pt u(x) = \sum_{n=1}^\infty u_n \phi_n(x), \hskip 15pt u_n &= \left\langle u, \phi_n \right\rangle_{L^2(\Omega)},
\end{align}
where we have further assumed that each $\phi_n$ has unit $L^2(\Omega)$ norm.

\subsection{The fractional Laplace problem}
Let $s \in (0,1)$. In this section we describe the spectral definition of the fractional operator $(-\Delta)^s$, on bounded domains, supplemented with homogeneous Dirichlet boundary conditions, see \cite{antil_fractional_2018} for the inhomogeneous case. Related definitions of similar nonlocal or fractional operators can be found in the literature \cite{kwasnicki_ten_2017}. If $u$ has an expansion in eigenfunctions, then a formal definition for application of the fractional operator is,
\begin{align}\label{eq:fraclap-explicit}
  u &= \sum_{n=1}^\infty u_n \phi_n(x) \hskip 10pt \Longrightarrow \hskip 10pt (-\Delta)^s u = \sum_{n=1}^\infty \lambda_n^s u_n \phi_n(x).
\end{align}
We likewise use $\lambda_n$ to define Sobolev spaces of fractional order. With $s \in (0,1)$:
\begin{align*}
  \mathbb{H}^s(\Omega) &\coloneqq \left\{ u \in L^2(\Omega) \;\; \big|\;\; (-\Delta)^{s/2}u \in L^2(\Omega) \right\}, & \mathbb{H}^{-s}(\Omega) &\coloneqq \mathbb{H}^s(\Omega)^\ast,
\end{align*}
where $\mathbb{H}^s(\Omega)^\ast$ denotes the dual space of $\mathbb{H}^s(\Omega)$. With these definitions, $(-\Delta)^s: \mathbb{H}^s(\Omega) \rightarrow \mathbb{H}^{-s}(\Omega)$. For the relation of $\mathbb{H}^s(\Omega)$ to the standard fractional order Sobolev space, see \cite{antil_fractional_2018}.

Given data $f \in L^2(\Omega)$, our main goal is to compute the solution $u$ to
\begin{align}\label{eq:fpde}
  \begin{split}
  (-\Delta)^s u = f,\hskip 20pt & \hskip 20pt x \in \Omega \\
  u = 0, \hskip 20pt & \hskip 20pt x \in \partial \Omega 
  \end{split}
\end{align}
for arbitrary $s \in (0,1)$. Notationally, we omit showing explicit dependence of $u$ on the spatial variable $x$, and only show dependence on the fractional order $s$, which is a parameter. This convention will be used in the remainder of this paper when considering solutions to parameterized PDE's: notational dependence on parameters will be explicit, but that on the spatial variable $x$ will be implicit. Therefore, we let $u(s) \in L^2$ denote the solution $u$ to \eqref{eq:fpde} for a fixed value of $s$.  We will be interested in developing a computational algorithm for computing the \textit{family} or \textit{manifold} of solutions,
\begin{align*}
  U \coloneqq \left\{ u(s) \; \big| \; s \in (0,1) \right\}.
\end{align*}
If $f \in L^2$, then the solution $u$ to \eqref{eq:fpde} has $\mathbb{H}^{2s}$ membership, so that the natural function space in which to study the manifold $U$ is $\cap_{s \in (0,1)} \mathbb{H}^{2 s}$. Hence, all of our investigations will assume $f \in L^2$ and study solutions $u(s)$ as elements of $L^2$. Note that $f \in L^2$ is a stronger requirement than for classical elliptic problems.

In the remainder of this document we will consider the problem \eqref{eq:fpde} with homogeneous Dirichlet boundary conditions. The inhomogeneous boundary case may be handled using the lifting technique in \cite{antil_fractional_2018}, which requires only a small modification of the algorithm proposed here.

\subsection{Kato's integral solution of \eqref{eq:fpde}}
The following remarkable result provides an appealing formula for the solution $u$ to \eqref{eq:fpde}:
\begin{align*}
  u(s) &= \beta(s) \int_{-\infty}^\infty e^{(1-s)y} \left( -\Delta + e^y \right)^{-1} f \dx{y}, & \beta(s) &\coloneqq \frac{\sin \pi s}{\pi}
\end{align*}
which is a reformulation of Kato's formula \cite[Theorem 2 with simplification $\lambda = 0$]{Kato_60}. This representation was first exploited in \cite{bonito_numerical_2015} for designing numerical algorithms, and is derived via a special kind of Dunford-Taylor integral.
To write the above more explicitly, define $q(y)$, for fixed $y \in \R$, as the solution to the \textit{classical} $y$-parameterized PDE,
\begin{subequations}\label{eq:dt-formulation}
  \begin{equation}\label{eq:dt-formulation-a}
  \begin{split}
  -\Delta q(y) + e^y q(y) = f, \hskip 20pt & \hskip 20pt x \in \Omega \\
  q(y) = 0, \hskip 20pt & \hskip 20pt x \in \partial\Omega.
  \end{split}
\end{equation}
Then $u$ is given by 
\begin{align}\label{eq:dt-formulation-b}
  u(s) = \beta(s) \int_{-\infty}^\infty q(y) e^{(1-s)y} \dx{y}.
\end{align}
\end{subequations}
This representation reveals that $u$ is actually just an integral of solutions $q$ to \textit{classical} Laplace-type problems. A solution method employing a discretization of the above formula then only requires solves of classical local PDE's in order to solve the nonlocal problem \eqref{eq:fpde}. The straightforward way to compute the solution via \eqref{eq:dt-formulation-b} is to approximate the integral with a quadrature rule. This would require computing solutions $q(\cdot;y)$ to the PDE \eqref{eq:dt-formulation-a} for many values of the parameter $y$. 

More precisely, let $\{ y_m, \tau_m \}_{m=1}^M$ be a quadrature rule for approximating the integral in \eqref{eq:dt-formulation-b}. We will give precise choices for this quadrature rule soon. Then we can approximate the solution $u(s)$ as 
\begin{align*}
  u(s) \approx u_M(s) \coloneqq \sum_{m=1}^M \tau_m q(y_m) e^{(1-s) y_m}.
\end{align*}
One then needs only to compute the ensemble of functions $\{q(y_m)\}_{m=1}^M$, which are solutions to classical PDE's, in order to approximate the solution to the fractional problem. This is the particular approach adopted in \cite{bonito_numerical_2015}, wherein a sinc quadrature rule is adopted, and associated error bounds are derived.

The observation we make in this paper is that the approach above requires approximately $M$ times the work of a classical problem; when $M$ is large (which can be required when $s$ is small), this may become computationally prohibitive{, see for instance \cite{CWeiss_BvBWaanders_HAntil_2018a}. However, there are by-now standard model reduction approaches that allow one to efficiently compute solutions to parameterized PDE's when the number of queries $M$ is very large; one such method that is directly applicable (to some extent) here is the reduced basis method, which we exploit in Section \ref{sec:rbm}. The next section expresses \eqref{eq:dt-formulation} in a more computationally robust formulation, and proposes a new kind of quadrature for $y$-discretization. We observe in Section \ref{sec:results} that our new quadrature approach is much more efficient than the most efficient strategy considered in \cite{bonito_numerical_2015}.

\section{Fractional Laplace solutions via integral formulation}\label{sec:gq}
Recall that given $s \in (0, 1)$, we seek to evaluate \eqref{eq:dt-formulation-b}, which defines the solution $u(s)$ to the fractional Laplace problem \eqref{eq:fpde}. In this section we describe our algorithm for doing so, which discretizes the $y$ variable using quadrature. The main components of this algorithm come in two stages: first we describe a partitioning formulation for the $y$ variable, followed by a quadrature discretization of the $y$ integral. 

The approach described in this section augments the approach presented in \cite{bonito_numerical_2015}; our improvements include $s$-independent stability in both the continuous and discrete case. For small values of $s$ and large $y$-quadrature rule size, the algorithm in \cite{bonito_numerical_2015} results in discrete operators whose norm becomes very large, which can be problematic for numerical implementation. In our reformulation, the discrete operators are uniformly bounded in $s$ (for any quadrature rule). Second, we replace the sinc quadrature in \cite{bonito_numerical_2015} with a Gaussian quadrature rule.\footnote{The authors in \cite{bonito_numerical_2015} also propose a ``Gaussian" quadrature rule, but theirs is a composite rule, whereas ours is a global rule and is designed differently.}  Our results in section \ref{sec:results} indicate that this quadrature rule is substantially more efficient than sinc quadrature.

\subsection{Partitioning of the $y$ variable}
To make computations numerically stable, we split our parameterized problem \eqref{eq:dt-formulation-a} into regions $y \in (-\infty, 0]$ and $y \in [0, \infty)$. 
To accomplish this introduce a new parameterized PDE for a solution $w$ that is closely related to the solution $q$ from \eqref{eq:dt-formulation-a}
\begin{align}\label{eq:aux-pde}
  \begin{split}
  -\alpha \Delta w(\alpha,\beta) + \beta w(\alpha,\beta) = f, \hskip 20pt & \hskip 20pt x \in \Omega \\
  w(\alpha,\beta) = 0, \hskip 20pt & \hskip 20pt x \in \partial\Omega.
  \end{split}
\end{align}
where $(\alpha,\beta) \in (0, \infty) \times [0, \infty)$ is the parameter. In our computational setting, we will only require $(\alpha,\beta) \in (0, 1]^2$.  Comparing \eqref{eq:aux-pde} with \eqref{eq:dt-formulation-a}, we see that $q(y) = w(1,e^y)$. We now define $w_{\pm}(y)$ as two specializations of $w$ that will be used in the following: 
\begin{subequations}\label{eq:wpm-continuous-def}
\begin{align}
  w_-(y) &\coloneqq w(1,e^{-y}), &  w_+(y) &\coloneqq w(e^{-y}, 1), & y \in [0,\infty). \\
  (-\Delta + e^{-y}) w_-(y) &= f, & (-e^{-y} \Delta + 1) w_+(y) &= f, & 
\end{align}
\end{subequations}
for $x \in \Omega$, with boundary conditions $w_{\pm}(y) = 0$ for $x \in \partial \Omega$. We can now formulate the solution to the fractional PDE \eqref{eq:fpde} in terms of these new quantities.
\begin{lemma}\label{lemma:u-partition}
  The solution $u(s)$ to \eqref{eq:fpde} is given by
  \begin{align}\label{eq:u-partition}
    u(s) &= \sum_{\sigma \in \{-,+\}} \beta_0\left(s_\sigma\right) \int_0^\infty w_\sigma\left(\frac{y}{s_\sigma}\right) W(y) \dx{y} \\\nonumber
         &= \beta_0(s_-) \int_0^\infty w_-\left(\frac{y}{s_-}\right) W(y) \dx{y} + \beta_0(s_+) \int_0^\infty w_+\left(\frac{y}{s_+}\right) W(y) \dx{y},
  \end{align}
  where $\beta_0$, $s_{\pm}$, and $W$ are defined as
  \begin{align*}
    \beta_0(s) &\coloneqq \beta(s)/s = \frac{\sin(\pi s)}{\pi s} = \sinc(s), & s_{\pm} &\coloneqq \frac{1}{2} \pm \left( s - \frac{1}{2} \right) & W(y) &\coloneqq e^{-y}.
  \end{align*}
\end{lemma}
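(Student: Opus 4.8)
The plan is to start from Kato's formula as stated in the excerpt, namely $u(s) = \beta(s) \int_{-\infty}^\infty e^{(1-s)y} (-\Delta + e^y)^{-1} f \dx{y}$, rewrite it in terms of the solution $q(y)$ to \eqref{eq:dt-formulation-a}, and then split the integral at $y = 0$ into $\int_{-\infty}^0$ and $\int_0^\infty$. On each piece I would perform a change of variables designed to (a) flip the infinite half-line $(-\infty,0]$ onto $[0,\infty)$ and (b) rescale so that the exponential weight becomes the clean factor $W(y) = e^{-y}$. The guess for the substitution comes from matching: we want $e^{(1-s)y}\dx y$ on the $y \le 0$ piece to turn into $e^{-t}\dx t$ up to constants, which suggests $y = -t/(1-s)$ there, and symmetrically on the $y \ge 0$ piece we want to use the conjugate exponent; writing $e^{(1-s)y} = e^{y} e^{-sy}$ shows that on $[0,\infty)$ the natural substitution is $y = t/s$. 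With $s_- = 1-s$ and $s_+ = s$ (which is exactly what $s_\pm = \tfrac12 \pm (s-\tfrac12)$ unpacks to), these two substitutions are the $\sigma = -$ and $\sigma = +$ terms respectively.

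First I would handle the $\sigma = -$ (i.e. $y \le 0$) term: substitute $y = -t/s_-$ with $s_- = 1-s$, so $\dx y = -\dx t/s_-$ and the limits $y: 0 \to -\infty$ become $t: 0 \to \infty$. The integrand $e^{(1-s)y}(-\Delta + e^y)^{-1}f$ becomes $e^{-t}(-\Delta + e^{-t/s_-})^{-1} f = e^{-t} w_-(t/s_-)$ by the definition \eqref{eq:wpm-continuous-def} of $w_-$, namely $(-\Delta + e^{-y})w_-(y) = f$ evaluated at $y = t/s_-$. Collecting the constant, $\beta(s) \cdot \frac{1}{s_-} = \frac{\beta(s)}{1-s}$, and here I would use the reflection-type identity $\beta(s) = \frac{\sin\pi s}{\pi} = \frac{\sin\pi(1-s)}{\pi}$, so $\frac{\beta(s)}{1-s} = \frac{\sin\pi(1-s)}{\pi(1-s)} = \sinc(1-s) = \beta_0(s_-)$. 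That yields exactly $\beta_0(s_-)\int_0^\infty w_-(t/s_-) W(t)\dx t$.

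Next the $\sigma = +$ (i.e. $y \ge 0$) term: write $e^{(1-s)y} = e^{y}e^{-sy}$ and $(-\Delta + e^y)^{-1} = e^{-y}(-e^{-y}\Delta + 1)^{-1}$, so the integrand is $e^{-sy}(-e^{-y}\Delta + 1)^{-1} f = e^{-sy} w_+(y)$ using the definition of $w_+$, namely $(-e^{-y}\Delta + 1)w_+(y) = f$. Now substitute $y = t/s_+$ with $s_+ = s$: $\dx y = \dx t/s$, limits $t: 0 \to \infty$, and $e^{-sy} = e^{-t} = W(t)$. The constant is $\beta(s)\cdot\frac1s = \frac{\sin\pi s}{\pi s} = \sinc(s) = \beta_0(s_+)$, giving $\beta_0(s_+)\int_0^\infty w_+(t/s_+) W(t)\dx t$. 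Adding the two pieces produces \eqref{eq:u-partition}.

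The routine parts are the changes of variables and the bookkeeping with $\beta$, $\beta_0$, $\sinc$, and the $s \leftrightarrow 1-s$ symmetry; the one point deserving genuine care — and what I'd flag as the main obstacle — is justifying that the original integral converges and may legitimately be split at $0$ and manipulated term-by-term. Near $y \to +\infty$ the resolvent $(-\Delta + e^y)^{-1}$ decays like $e^{-y}$ in operator norm while the weight $e^{(1-s)y}$ grows only like $e^{(1-s)y}$ with $1-s < 1$, so $\|e^{(1-s)y} q(y)\| \lesssim e^{-sy}$ is integrable; near $y \to -\infty$ the resolvent norm is bounded by $1/\lambda_1$ (uniformly, since $e^y \to 0$) and the weight $e^{(1-s)y}$ decays, giving integrability there too. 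So the split and the substitutions are valid on absolutely convergent integrals, and I would include a sentence to that effect — invoking the eigenfunction expansion \eqref{eq:fraclap-explicit}–\eqref{eq:un-expansion} to make the resolvent bounds precise — before carrying out the computation.
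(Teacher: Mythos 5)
Your proposal is correct and follows essentially the same route as the paper's proof: split Kato's integral at $y=0$, use the identities $w(1,e^{-y}) = w_-(y)$ and $e^{-y}w_+(y) = w(1,e^y)$ together with $\beta(s) = s_\pm\,\beta_0(s_\pm)$, and rescale each half-line integral by $1/s_\mp$ to produce the weight $W$. The only difference is cosmetic (you merge the sign flip and rescaling into a single substitution, and add an absolute-convergence justification for splitting the integral that the paper omits), so no further comment is needed.
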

\begin{proof}
  Beginning with \eqref{eq:dt-formulation-b}, we have 
  \begin{align}\nonumber
    u &= \beta(s) \int_{-\infty}^0 e^{(1-s) y} w(1,e^y) \dx{y} + \beta(s) \int_0^\infty e^{(1-s) y} w(1,e^y) \dx{y} \\\nonumber
                  &= (1-s) \beta_0(1-s) \int_0^\infty w_-(y) \exp(-(1-s) y) \dx{y} + s \beta_0(s) \int_0^\infty w_+(y) \exp(-s y)\dx{y} \\\label{eq:integral-division}
                  &= \beta_0(1-s) \int_0^\infty w_-\left(\frac{y}{1-s}\right) W(y) \dx{y} + \beta_0(s) \int_0^\infty w_+\left(\frac{y}{s}\right) W(y) \dx{y},
  \end{align}
  completing the proof.
\end{proof}
Given the domain $\Omega$, we define $C_\Omega$ as the domain's Poincar\'e constant, i.e., the smallest constant such that for every $v \in H_0^1(\Omega)$,
\begin{align}\label{eq:PI}
  \left\| v \right\| &\leq C_\Omega \left\| \nabla v \right\|.
\end{align}
The space $H_0^1(\Omega)$ is the standard Sobolev space of zero-trace $L^2(\Omega)$ functions whose gradients are also in $L^2$. In what follows, we will also need the following quantity,
\begin{align}\label{eq:comega-def}
  \widetilde{C}_\Omega^2 \coloneqq \max\left( 1, C_\Omega^2\right),
\end{align}
which also depends only on $\Omega$. Fixing $(\alpha,\beta)$, the weak formulation of \eqref{eq:aux-pde} seeks a solution $w(\alpha,\beta) = w \in H^1_0\left(\Omega\right)$ as the unique function satisfying the Galerkin formulation,
\begin{align}\label{eq:w-weak-form}
  a\left(w, v; \alpha, \beta\right) &\coloneqq \left\langle f, v \right\rangle, & \forall\; v &\in H^1_0\left(\Omega\right),
\end{align}
with the bilinear form $a(\cdot,\cdot;\alpha, \beta)$ defined as
\begin{align}\label{eq:a-def}
  a\left(w, v; \alpha, \beta\right) \coloneqq \alpha \left\langle \nabla w, \nabla v \right\rangle + \beta \left\langle w, v \right\rangle.
\end{align}
With $\alpha > 0$ and $\beta \geq 0$, the Poincar\`e inequality \eqref{eq:PI} ensures that the coercivity property $a(v,v;y) \geq \alpha \| \nabla v \|^2_{L^2(\Omega)} \geq k \|v\|_{H^1_0(\Omega)}$ holds for some $k > 0$ uniformly in $\beta$, so that standard Lax-Milgram theory then yields a unique $H_0^1(\Omega)$ solution. 

With this setup, we can demonstrate the utility of a formula like \eqref{eq:u-partition} by deriving an $s$-\textit{independent} $L^2$ stability estimate for solutions to \eqref{eq:fpde}.
\begin{proposition}\label{prop:u-stable}
  Assume $f \in L^2$. Then
  \begin{align*}
  \sup_{s \in (0,1)} \| u(s) \| \leq \frac{4 \widetilde{C}_\Omega^2}{\pi} \| f\|.
  \end{align*}
\end{proposition}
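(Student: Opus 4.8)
The plan is to combine the representation formula of Lemma~\ref{lemma:u-partition} with elementary energy estimates for the auxiliary problems \eqref{eq:wpm-continuous-def}, and then reduce the remaining $s$-dependence to a scalar trigonometric inequality. Recall from the lemma that $s_+ = s$, $s_- = 1-s$, $W(y) = e^{-y}$, and $\beta_0 = \sinc$.

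First I would derive uniform-in-$y$ $L^2$ bounds for $w_-$ and $w_+$. For $w_-(y)$, which solves \eqref{eq:wpm-continuous-def} (weak form \eqref{eq:w-weak-form} with bilinear form $a(\cdot,\cdot;1,e^{-y})$), I test against $v = w_-(y)$ to get $\|\nabla w_-(y)\|^2 + e^{-y}\|w_-(y)\|^2 = \langle f, w_-(y)\rangle \le \|f\|\,\|w_-(y)\|$; discarding the nonnegative term $e^{-y}\|w_-(y)\|^2$ and applying the Poincar\'e inequality \eqref{eq:PI} yields $\|w_-(y)\| \le C_\Omega^2\|f\| \le \widetilde C_\Omega^2\|f\|$ for every $y \ge 0$. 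For $w_+(y)$, solving \eqref{eq:wpm-continuous-def} with $a(\cdot,\cdot;e^{-y},1)$, the same test function gives $e^{-y}\|\nabla w_+(y)\|^2 + \|w_+(y)\|^2 = \langle f, w_+(y)\rangle \le \|f\|\,\|w_+(y)\|$, and discarding the first (nonnegative) term gives the cleaner bound $\|w_+(y)\| \le \|f\| \le \widetilde C_\Omega^2\|f\|$. In both cases the $y$-dependent term is exactly the one discarded, so the bounds hold uniformly in $y \ge 0$; the constant $\widetilde C_\Omega^2$ of \eqref{eq:comega-def} is used precisely so that one constant dominates both the $w_-$ estimate (carrying $C_\Omega^2$) and the $w_+$ estimate (carrying $1$).

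Next I would insert these into \eqref{eq:u-partition}. Since $s_\pm \in (0,1)$ and $y \ge 0$, the arguments $y/s_\pm$ are nonnegative, so by the triangle inequality, $\beta_0 = \sinc \ge 0$ on $(0,1)$, and $\int_0^\infty W(y)\dx{y} = \int_0^\infty e^{-y}\dx{y} = 1$,
\begin{align*}
  \|u(s)\| \;\le\; \widetilde C_\Omega^2\|f\|\,\bigl(\beta_0(s_-) + \beta_0(s_+)\bigr) \;=\; \widetilde C_\Omega^2\|f\|\,\bigl(\sinc(1-s) + \sinc(s)\bigr).
\end{align*}
It remains to show $\sinc(s) + \sinc(1-s) \le 4/\pi$ for all $s \in (0,1)$. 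Using $\sin\pi(1-s) = \sin\pi s$, the left side equals $\tfrac{\sin\pi s}{\pi}\bigl(\tfrac1s + \tfrac1{1-s}\bigr) = \tfrac{\sin\pi s}{\pi\,s(1-s)}$, so the claim is the Jordan-type inequality $\sin\pi s \le 4s(1-s)$ on $[0,1]$; the substitution $s = \tfrac12 + t$ turns this into $\cos\pi t \le 1 - 4t^2$ for $|t| \le \tfrac12$, which follows from a short monotonicity argument on the difference $\psi(t) = 1 - 4t^2 - \cos\pi t$ (one checks $\psi(0)=\psi(\tfrac12)=0$, $\psi'(0)=0$, and that $\psi'$ changes sign exactly once on $(0,\tfrac12)$, whence $\psi \ge 0$ there; even symmetry handles $t<0$). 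Combining the three steps gives $\sup_{s\in(0,1)}\|u(s)\| \le \tfrac{4}{\pi}\widetilde C_\Omega^2\|f\|$, the bound being sharpest near $s = \tfrac12$.

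I expect the only nonroutine point to be the scalar inequality $\sin\pi s \le 4s(1-s)$ (equivalently $\sup_s[\sinc(s)+\sinc(1-s)] = 4/\pi$, attained at $s=\tfrac12$); everything else is a direct energy estimate together with the explicit formula \eqref{eq:u-partition}. The cruder bound $\sinc(s)+\sinc(1-s) \le 2$ is immediate but loses the factor $2/\pi$, so some care with this elementary inequality is what produces the stated constant.
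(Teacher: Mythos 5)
Your proof is correct and follows essentially the same route as the paper: test the weak forms of $w_\pm$ against themselves to get the $y$-uniform bounds $\|w_+(y)\|\le\|f\|$ and $\|w_-(y)\|\le C_\Omega^2\|f\|$, insert into \eqref{eq:u-partition} using that $W$ is a probability density, and reduce to $\beta_0(s_-)+\beta_0(s_+)=\frac{\sin\pi s}{\pi s(1-s)}\le\frac{4}{\pi}$. The only difference is that you supply a proof of the elementary inequality $\sin\pi s\le 4s(1-s)$, which the paper simply asserts.
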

\begin{proof}
  The function $w_+(y) \in H_0^1(\Omega)$ is the unique solution to 
  \begin{align*}
    a\left(w_+(y), v; e^{-y}, 1\right) &= \left\langle f, v \right\rangle, & \forall \; v &\in H_0^1(\Omega).
  \end{align*}
  Taking $v = w_+(y)$ and using the Cauchy-Schwarz and Poincar\'e inequalities results in 
  \begin{align*}
    \| f \| \| w_+(y) \| &\geq \left\langle f, w_+(y) \right\rangle = e^{-y} \left\langle \nabla w_+(y), \nabla w_+(y) \right\rangle + \left\langle w_+(y), w_+(y) \right\rangle \\
                                     &\geq \left( 1 + \frac{e^{-y}}{c^2_\Omega}\right) \| w_+(y) \|^2 \geq \| w_+(y)\|^2.
  \end{align*}
  We thus obtain
  \begin{subequations}\label{eq:cpm}
  \begin{align}
    \sup_{y \geq 0} \| w_+(y)\| \leq \| f\|.
  \end{align}
  A similar computation for $w_-$ shows that
  \begin{align}
    \sup_{y \geq 0} \| w_-(y)\| \leq C^2_\Omega \| f\|.
  \end{align}
  \end{subequations}
  Therefore, taking the $L^2$ norm in \eqref{eq:u-partition} and using the triangle inequality yields
  \begin{align*}
    \| u(s) \| &\leq \beta_0(s_-) \int_0^\infty \left\| w_-\left(\frac{y}{s_-}\right) \right\| W(y) \dx{y} + \beta_0(s_+) \int_0^\infty \left\| w_+\left(\frac{y}{s_+}\right)\right\| W(y) \dx{y} \\ 
                     &\stackrel{\eqref{eq:cpm}}{\leq} \beta_0(s_-) C^2_\Omega \|f\| \int_0^\infty W(y) \dx{y} + \beta_0(s_+) \|f\| \int_0^\infty W(y) \dx{y} \\
    &\leq \max( 1 , C^2_\Omega ) \| f\| \left[ \beta_0(s_-) + \beta_0(s_+)\right],
  \end{align*}
  where the third inequality uses the fact that $W$ is a probability density on $[0, \infty)$. From the above, we immediately obtain the desired result by noting that 
  \begin{align*}
    \beta_0(s_-) + \beta_0(s_+) &= \frac{\sin(\pi s)}{\pi s(1-s) } \leq \frac{4}{\pi}, & s \in (0,1).
  \end{align*}
  The proof is complete.
\end{proof}

\subsection{Spatial discretization}\label{ssec:space-discretization}
In this section we employ a spatial discretization to the result of Lemma \ref{lemma:u-partition}.  
We proceed to discretize \eqref{eq:a-def} using a finite element method. 
Let $T_\Omega$ be a conforming triangulation of $\Omega$ with $K$ elements. 
We assume each element in the triangulation is isoparametrically equivalent to a standard canonical triangle/tetrahedron. 
For a fixed polynomial degree $k \geq 1$, we define the finite element space
\begin{align*}
  V = \left\{ v \in C\left(\overline{\Omega}\right) \;\; \big| \;\; v|_e \in P_k\left(e\right)\;\; \forall e \in T_\Omega, \ v|_{\partial\Omega} = 0 \right\},
\end{align*}
where $P_k(e)$ is the space of polynomials of degree $k$ or less over the element $e \in T_\Omega$. 
Let $\calN = \dim V$. 
The finite element-discretized version of \eqref{eq:w-weak-form} is the Galerkin formulation seeking $w^{\calN} \in V$ satisfying
\begin{align}\label{eq:w-galerkin}
  a\left(w^{\calN}, v; \alpha, \beta \right) &= \left\langle f, v \right\rangle, & \forall v &\in V.
\end{align}
Let $w^\calN(\alpha,\beta)$ be expressed as a linear expansion,
\begin{align}\label{eq:truth-expansion}
  w^\calN(\alpha,\beta) &= \sum_{n=1}^{\calN} w^\calN_n(\alpha,\beta) \psi_n,
\end{align}
where $\{\psi_n\}_{n=1}^\calN$ is a basis for $V$, e.g., a basis comprised of compactly supported piecewise polynomials. Collecting the linear degrees of freedom of $w^\calN(y) \in V$ in the $\calN$-dimensional vector $\bs{w}^\calN$, then this vector satisfies the linear system,
\begin{subequations}\label{eq:truth-discretization}
\begin{align}
  \bs{A}(y) \bs{w}^{\calN}(\alpha,\beta) &= \bs{f}, & (\bs{f})_j &= \left\langle f, \psi_j \right\rangle,
\end{align}
where the $\calN \times \calN$ matrix $\bs{A}(y)$ has entries,
\begin{align}
  (\bs{A}(y))_{j,k} &= a\left( \psi_k, \psi_j \right) = \alpha \left\langle \nabla \psi_k, \nabla \psi_j \right\rangle + \beta \left\langle \psi_k, \psi_j \right\rangle \coloneqq \alpha (\bs{S})_{j,k} + \beta (\bs{M})_{j,k},
\end{align}
\end{subequations}
for $j, k = 1, \ldots, \calN$. Above we have defined the $\calN \times \calN$ $y$-independent stiffness and mass matrices $\bs{S}$ and $\bs{M}$, respectively. Both $\bs{S}$ and $\bs{M}$ are symmetric and positive-definite. 

The matrices $\bs{S}$ and $\bs{M}$ can be used to define a ``discretized" Poincar\'e constant $C_\calN$ by using a standard Rayleigh quotient argument:
\begin{align}\label{eq:CN-def}
  \frac{1}{C_\Omega^2} = \inf_{v \in H_0^1\backslash\{0\}} \frac{\|\nabla v\|^2}{\|v\|^2} \leq \inf_{v \in V} \frac{\|\nabla v\|^2}{\|v\|^2} = \inf_{\bs{v} \in \R^\calN\backslash\{\bs{0}\}} \frac{\bs{v}^T \bs{S} \bs{v}}{\bs{v}^T \bs{M} \bs{v}} = \lambda_{\mathrm{min}}\left(\bs{S}, \bs{M}\right) \eqqcolon \frac{1}{C_\calN^2},
\end{align}
hence leading to the inequalities,
\begin{align}\label{eq:CNtilde-def}
  C_{\calN} &\leq C_\Omega, & \max\left(1, C_{\calN}^2\right) \stackrel{\eqref{eq:comega-def}}{\leq} \widetilde{C}_\Omega^2.
\end{align}
Our estimates for $x$-discrete quantities will involve $C_{\calN}$, but we will sometimes use the above inequality to bound quantities in terms of $C_\Omega$. Bounds involving $C_\Omega$ emphasize independence of the $x$-discretization. Bounds involving $C_\calN$ emphasize the explicit computability of the bounds, since $C_\calN$ is equal to an extremal eigenvalue of finite element matrices, which is computable with iterative eigenvalue solvers. 

The maximum generalized eigenvalue of $(\bs{S}, \bs{M})$ will also play a small role in our estimates. In analogy with \eqref{eq:CN-def} we define
\begin{align}\label{eq:KN-def}
  \frac{1}{K_\calN^2} \coloneqq \lambda_{\mathrm{max}}\left(\bs{S}, \bs{M}\right).
\end{align}
Note that $K_\calN$ in general tends to 0 as $\calN \uparrow \infty$. 

From the discretization of $w$ we derive discretizations of $w_{\pm}(y)$ defined in \eqref{eq:wpm-continuous-def}. 
Thus, finite element discretizations for $w_{\pm}$ are specializations of those for $w$. In particular, we define
\begin{align*}
  w^\calN_-(y) &\coloneqq w^\calN\left(1, e^{-y}\right) \in V, & w^\calN_+(y) &\coloneqq w^\calN\left(e^{-y}, 1\right) \in V.
\end{align*}
Denote by $\bs{w}_{\pm}^\calN(y)$ the $\calN$-dimensional vectors that are solutions to the linear systems,
\begin{align}\label{eq:wpm-discrete-def}
  \left( \bs{S} + e^{-y} \bs{M}\right) \bs{w}^\calN_-(y) &= \bs{f}, & \left( e^{-y} \bs{S} + \bs{M}\right) \bs{w}^\calN_+(y) &= \bs{f}, & y &\in [0, \infty),
\end{align}
so that, akin to \eqref{eq:truth-expansion}, we have 
\begin{align*}
  w^\calN_{\pm}(y) &= \sum_{j=1}^\calN w^\calN_{j,\pm}(y) \psi_j, & \bs{w}_{\pm}^\calN(y) &= \left( w_{1,\pm}^\calN(y), \ldots, w_{\calN,\pm}^\calN(y) \right)^T.
\end{align*}
We can now codify the fact that the solutions $w^\calN_{\pm}(y)$ are $L^2$-stable \textit{uniformly} in $y$.
\begin{lemma}
  Assume $f \in L^2(\Omega)$. Then
  \begin{subequations}\label{eq:wl2}
    \begin{align}
      \label{eq:wl2-a}
      \sup_{y \geq 0} \| w^\calN_{+}(y) \| &\leq \| f \|, \\\label{eq:wl2-b}
      \sup_{y \geq 0} \| w^\calN_{-}(y) \| &\leq C_\calN^2 \| f \|. 
    \end{align}
  \end{subequations}
\end{lemma}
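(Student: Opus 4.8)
The plan is to repeat, at the finite-element level, the argument that produced the bounds \eqref{eq:cpm} in the proof of Proposition \ref{prop:u-stable}, replacing the continuous Poincar\'e constant $C_\Omega$ by its discrete analogue $C_\calN$. The only structural facts needed are that $V \subset H_0^1(\Omega)$ is a conforming subspace, that the Galerkin problem \eqref{eq:w-galerkin} is well posed, and that the discrete Rayleigh quotient in \eqref{eq:CN-def} gives the discrete Poincar\'e inequality $\|v\| \le C_\calN \|\nabla v\|$ for every $v \in V$.

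First I would treat $w^\calN_+(y)$, which by \eqref{eq:wpm-continuous-def} and \eqref{eq:w-galerkin} is the unique element of $V$ satisfying $a(w^\calN_+(y), v; e^{-y}, 1) = \langle f, v\rangle$ for all $v \in V$. Testing with $v = w^\calN_+(y)$ and using $a(v,v;e^{-y},1) = e^{-y}\|\nabla v\|^2 + \|v\|^2 \ge \|v\|^2$ (valid since $e^{-y} \ge 0$), together with Cauchy--Schwarz on the right-hand side, gives $\|f\|\,\|w^\calN_+(y)\| \ge \|w^\calN_+(y)\|^2$, hence \eqref{eq:wl2-a}. Note this bound is in fact uniform over all $y \in \R$, in particular over $y \ge 0$.

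Next I would treat $w^\calN_-(y)$, the unique element of $V$ with $a(w^\calN_-(y), v; 1, e^{-y}) = \langle f, v\rangle$ for all $v \in V$. Testing with $v = w^\calN_-(y)$ gives $\langle f, w^\calN_-(y)\rangle = \|\nabla w^\calN_-(y)\|^2 + e^{-y}\|w^\calN_-(y)\|^2 \ge \|\nabla w^\calN_-(y)\|^2$, and then the discrete Poincar\'e inequality coming from \eqref{eq:CN-def} bounds the latter below by $C_\calN^{-2}\|w^\calN_-(y)\|^2$. Combining with Cauchy--Schwarz yields $\|f\|\,\|w^\calN_-(y)\| \ge C_\calN^{-2}\|w^\calN_-(y)\|^2$, i.e.\ \eqref{eq:wl2-b}.

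I do not expect any genuine obstacle here: the result is a transcription of the continuous estimate to the conforming finite-element subspace. The only point requiring a little care is bookkeeping of the constant---using $C_\calN$ rather than $C_\Omega$ in the discrete Poincar\'e step (which is the sharp choice and, by \eqref{eq:CNtilde-def}, is dominated by $C_\Omega$)---and observing that the $e^{-y}\langle\cdot,\cdot\rangle$ and $e^{-y}\langle\nabla\cdot,\nabla\cdot\rangle$ terms are nonnegative and so may simply be discarded for $y \ge 0$.
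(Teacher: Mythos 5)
Your proof is correct. It differs from the paper's own proof of this lemma, which never touches the variational form: the paper works entirely at the algebraic level, first establishing the norm identities $\| \bs{f} \|^2_{\bs{M}^{-1}} = \|P_V f\|^2 \le \|f\|^2$ and $\|\bs{w}^\calN_+\|^2_{\bs{M}} = \|w^\calN_+\|^2$, then conjugating the linear system \eqref{eq:wpm-discrete-def} by $\bs{M}^{-1/2}$ and bounding $\lambda_{\mathrm{min}}\bigl(e^{-y}\bs{M}^{-1/2}\bs{S}\bs{M}^{-1/2} + \bs{I}\bigr) \ge 1$ (and the analogous generalized-eigenvalue bound via $C_\calN$ for the ``$-$'' case). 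Your route instead tests the Galerkin formulation \eqref{eq:w-galerkin} with the solution itself and discards the nonnegative term, exactly mirroring the continuous argument that yields \eqref{eq:cpm} in Proposition \ref{prop:u-stable}, with the discrete Rayleigh quotient \eqref{eq:CN-def} supplying the Poincar\'e constant $C_\calN$. Your version is shorter and makes the parallel with the continuous estimate transparent; the paper's matrix version has the side benefit of introducing the weighted-norm identities and the operator $\bs{M}^{-1/2}\bs{S}\bs{M}^{-1/2}$ that are reused verbatim in the quadrature-error analysis of Proposition \ref{prop:qerror}. Both give the same constants, so there is no loss either way.
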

\begin{proof}
  The result can be obtained by considering the discrete form \eqref{eq:wpm-discrete-def}. To begin we relate the $L^2$ norm of $f$ to the Euclidean $\ell^2$ norm of $\bs{f}$. Let $P_V$ denote the $L^2$-orthogonal projector onto $V$. Then:
  \begin{align*}
    f_j = \left\langle f, \psi_j \right\rangle \hskip 5pt &\Longrightarrow \hskip 5pt \| P_V f\|^2 = \bs{f}^T \bs{M}^{-1} \bs{f}. \\
    w_+^\calN(y) = \sum_{j=1}^\calN w_{j,+}^\calN(y) \psi_j \hskip 5pt &\Longrightarrow \hskip 5pt \| w_+^\calN(y)\|^2 = \left( \bs{w}^\calN_+\right)^T \bs{M} \left( \bs{w}^\calN_+\right).
  \end{align*}
  Thus we have 
  \begin{align}\label{eq:lemma-dc-norms}
    \begin{split}
    \| \bs{f} \|^2_{\bs{M}^{-1}} = \| P_V f \|^2 \leq \| f\|^2, \\
    \| \bs{w}^\calN_+ \|^2_{\bs{M}} = \| w^\calN_+ \|^2,
    \end{split}
  \end{align}
  Now since $\bs{M}$ is symmetric and positive-definite, it has a unique symmetric positive-definite square root $\bs{M}^{1/2}$. Thus:
  \begin{align*}
    \left( e^{-y} \bs{S} + \bs{M} \right) \bs{w}^\calN_+(y) = \bs{f} \hskip 5pt \Longrightarrow \left( e^{-y} \bs{M}^{-1/2} \bs{S} \bs{M}^{-1/2} + \bs{I} \right) \left( \bs{M}^{1/2} \bs{w}^\calN_+(y) \right) = \bs{M}^{-1/2} \bs{f}.
  \end{align*}
  This in turn implies:
  \begin{align*}
    \| \bs{w}^\calN_+ \|_{\bs{M}} \leq \frac{1}{\lambda_{\mathrm{min}}\left( e^{-y} \bs{M}^{-1/2} \bs{S} \bs{M}^{-1/2} + \bs{I} \right)} \| \bs{f}\|_{\bs{M}^{-1}}.
  \end{align*}
  Since $\bs{M}^{-1/2} \bs{S} \bs{M}^{-1/2}$ is symmetric and positive-definite, we have
  \begin{align*}
    \lambda_{\mathrm{min}}\left( e^{-y} \bs{M}^{-1/2} \bs{S} \bs{M}^{-1/2} + \bs{I} \right) \geq \lambda_{\mathrm{min}}\left( \bs{I} \right) = 1.
  \end{align*}
  Therefore, 
  \begin{align*}
    \| \bs{w}^\calN_+ \|_{\bs{M}} \leq \frac{\| \bs{f}\|_{\bs{M}^{-1}}}{\lambda_{\mathrm{min}}\left( e^{-y} \bs{M}^{-1/2} \bs{S} \bs{M}^{-1/2} + \bs{I} \right)} \leq \| \bs{f} \|_{\bs{M}^{-1}},
  \end{align*}
  which, when combined with \eqref{eq:lemma-dc-norms} yields \eqref{eq:wl2-a}. A similar computation for $w_-(y)$ yields \eqref{eq:wl2-b} by using the definition of $C_\calN$ in \eqref{eq:CN-def}.
\end{proof}
The result above gives the stability of an algorithm that uses $w^{\calN}_{\pm}(y)$ as a spatial discretization. In particular, consider the following semi-discrete approximation of $u(s)$,
\begin{align}\label{eq:u-semidiscrete}
  \bs{\widetilde{u}}^\calN(s) &\coloneqq \sum_{\sigma \in \{+,-\}} \beta_0(s_\sigma) \int_0^\infty \bs{w}^\calN_\sigma\left(\frac{y}{s_\sigma}\right) W(y) \dx{y}, & 
  \widetilde{u}^\calN(s) &\coloneqq \sum_{j=1}^\calN \widetilde{u}^\calN_j(s) \psi_j \in V.
\end{align}
A fully discrete scheme, introduced in the next section, would discretize the $y$ variable. The following result mirrors the stability estimate of Proposition \ref{prop:u-stable}, showing $s$-uniform $L^2$ stability of the semi-discrete solution.
\begin{proposition}
  Assume $f \in L^2$. Then
  \begin{align*}
    \sup_{s \in (0,1)} \| \widetilde{u}^\calN(s) \| \leq \frac{4 \widetilde{C}_\Omega^2}{\pi} \| f\|.
  \end{align*}
\end{proposition}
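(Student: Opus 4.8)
The plan is to transcribe the proof of Proposition \ref{prop:u-stable} almost verbatim, substituting the discrete stability bounds \eqref{eq:wl2} for their continuous analogues \eqref{eq:cpm} and invoking \eqref{eq:CNtilde-def} to make the constant $\calN$-independent. First I would note that $\widetilde{u}^\calN(s)\in V$ is the finite element function whose coefficient vector is $\bs{\widetilde{u}}^\calN(s)$, and since the (linear, continuous) map sending a coefficient vector to the corresponding element of $V$ commutes with the $y$-integration, we have the identity in $V\subset L^2$,
\[
  \widetilde{u}^\calN(s) = \sum_{\sigma\in\{-,+\}} \beta_0(s_\sigma)\int_0^\infty w^\calN_\sigma\!\left(\frac{y}{s_\sigma}\right) W(y)\,\dx{y}.
\]
Taking $L^2$ norms, using the triangle inequality on the two-term sum, and then Minkowski's integral inequality to move the norm inside each integral gives
\[
  \|\widetilde{u}^\calN(s)\| \le \sum_{\sigma\in\{-,+\}} \beta_0(s_\sigma)\int_0^\infty \left\| w^\calN_\sigma\!\left(\frac{y}{s_\sigma}\right)\right\| W(y)\,\dx{y}.
\]

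Next I would apply the preceding lemma, which bounds $\|w^\calN_+(\cdot)\|\le \|f\|$ and $\|w^\calN_-(\cdot)\|\le C_\calN^2\|f\|$ uniformly in the argument; hence each integrand is dominated by $\max(1,C_\calN^2)\|f\|\,W(y)$. Since $W(y)=e^{-y}$ is a probability density on $[0,\infty)$, each $y$-integral equals $1$, leaving $\|\widetilde{u}^\calN(s)\| \le \max(1,C_\calN^2)\|f\|\,[\beta_0(s_-)+\beta_0(s_+)]$. Using \eqref{eq:CNtilde-def} to replace $\max(1,C_\calN^2)$ by the $\calN$-independent quantity $\widetilde{C}_\Omega^2$, and then the elementary identity and estimate $\beta_0(s_-)+\beta_0(s_+) = \frac{\sin\pi s}{\pi s(1-s)} \le \frac{4}{\pi}$ for $s\in(0,1)$ (already established at the end of the proof of Proposition \ref{prop:u-stable}), yields the claimed bound uniformly in $s$.

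The only step needing any care — and it is minor — is the interchange of the $V$-valued integral with the $L^2$ norm. This is legitimate because $y\mapsto w^\calN_\sigma(y/s_\sigma)$ is a continuous $V$-valued function whose coefficient vector solves a linear system with $y$-analytic, boundedly invertible coefficient matrix, and the uniform bounds from the preceding lemma together with the exponential weight $W$ provide an integrable majorant; thus the integral is a genuine Bochner integral in $L^2$ and Minkowski's inequality applies. No obstacle beyond what was handled in the continuous case arises, so the proof is a direct adaptation of that of Proposition \ref{prop:u-stable}.
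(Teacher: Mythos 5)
Your proof is correct and is precisely the argument the paper intends: it omits the proof but states that it is ``almost exactly the same as for Proposition \ref{prop:u-stable} using the discrete stability estimates \eqref{eq:wl2}'', with the only new ingredient being the inequality \eqref{eq:CN-def}/\eqref{eq:CNtilde-def} to replace $\max(1,C_\calN^2)$ by $\widetilde{C}_\Omega^2$ --- which is exactly what you do. Your extra care about the Bochner integral and Minkowski's inequality is a harmless elaboration of a step the paper already takes implicitly in the continuous case.
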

The proof, which we omit, is almost exactly the same as for Proposition \ref{prop:u-stable} using the discrete stability estimates \eqref{eq:wl2}; the only novelty is the need to exercise the inequality \eqref{eq:CN-def}.

Having described the spatial discretization, we now proceed to describe a discretization for the $y$-integrals in \eqref{eq:u-partition}.

\subsection{Quadrature for the $y$-integral}
We will use an $M$-point $W$-Gaussian quadrature rule to discretize the integrals in \eqref{eq:u-partition}. The weight function $W(y)$ is a weight function associated to a classical family of orthogonal polynomials: Laguerre polynomials. Let $\{p_n\}_{n \geq 0}$ denote the family of Laguerre polynomials, orthonormal under the weight $W$, i.e., 
\begin{align*}
  \int_0^\infty p_n(y) p_m(y) W(y) \dx{y} &= \delta_{m,n}, & m, n &\in \N_0,
\end{align*}
where $\delta_{m,n}$ is the Kronecker delta function. Like all orthogonal polynomials, the family $\{p_n\}_{n \geq 0}$ satisfies a three-term recurrence formula,
\begin{align*}
  y p_n(y) &= b_{n+1} p_{n+1}(y) + a_{n+1} p_n(y) + b_n p_{n-1}(y), & n&\geq 0,
\end{align*}
with $p_0 \equiv 1$ and $p_{-1} \equiv 0$. The recurrence coefficients $(a_n,b_n)$ are explicitly known:
\begin{align*}
  b_0 &= 1, & b_n &= n & a_n &= 2n - 1, & n &\geq 1.
\end{align*}
Among the properties of orthogonal polynomial families is the existence of a unique $M$-point quadrature rule with optimal polynomial exactness, the \textit{Gaussian} quadrature rule. This rule has abscissae and weights, $(y_m, \tau_m)_{m=1}^M$, respectively, and integrates polynomials up to degree $2M -1$ exactly: 
\begin{align*}
  \int_0^\infty p(y) W(y) \dx{y} &= \sum_{j=1}^M \tau_j p(y_j), & p &\in \mathrm{span}\{1, y, \ldots, y^{2 M-1} \}.
\end{align*}
Although $y_j$ and $\tau_j$ depend on the value of $M$, we omit this explicit notational dependence. This rule can also be easily computed with knowledge of the recurrence coefficients. Defining the $M \times M$ symmetric tridiagonal Jacobi matrix,
\begin{align*}
  \bs{J}_M \coloneqq \left( \begin{array}{ccccc} a_1 & b_1 & 0 & \cdots & \\
  b_1 & a_2 & b_2 & \ddots & \\
      & \ddots & \ddots & \ddots & \\
      & 0 & \cdots & b_{M-1} & a_M \end{array}\right),
\end{align*}
consider its associated eigenvalue decomposition,
\begin{subequations}\label{eq:gq-nodes-weights}
\begin{align}
\bs{J}_M &= \bs{V} \bs{\Lambda} \bs{V}^T, & \bs{\Lambda} &= \mathrm{diag}(\lambda_1, \ldots, \lambda_M), & \bs{V} = \left[\begin{array}{cccc} \bs{v}_1 & \bs{v}_2 & \cdots & \bs{v}_M \end{array}\right],
\end{align}
where $\bs{V}$ is unitary since $\bs{J}_M$ is symmetric. The Gaussian quadrature rule can be computed from these quantities. In particular,
\begin{align}
  y_j &= \lambda_j, & \tau_j = v_{j,1}^2 b_0^2,
\end{align}
\end{subequations}
where $v_{j,1}^2$ is the first component of the vector $\bs{v}_j$. To compute an $M$-point quadrature rule for $W$ thus requires a size-$M$ eigenvalue computation. Since $W$ is a probability density function, then likewise $\sum_{j=1}^M \tau_j = 1$, and furthermore each $\tau_j$ is non-negative by \eqref{eq:gq-nodes-weights}.

Now let $M_{-}$ and $M_+$ be the number of quadrature points used to approximate the ``$-$" and ``$+$" integrals in \eqref{eq:u-partition}, respectively. This results in two sets of $W$-Gaussian quadrature rules,
\begin{align*}
  \left( y_{j,-}, \tau_{j,-} \right)_{j=1}^{M_-}, \hskip 15pt \left( y_{j,+}, \tau_{j,+} \right)_{j=1}^{M_+}.
\end{align*}
We apply these rules to the integrals in \eqref{eq:u-semidiscrete}, resulting in the fully discrete approximation,
\begin{align}\label{eq:gq-truth-solution}
  \bs{u}^\calN(s) &\coloneqq \sum_{\sigma \in \{+,-\}} \beta_0(s_\sigma) \sum_{j=1}^{M_\sigma} w_\sigma\left(\frac{y_{j,\sigma}}{s_\sigma}\right) W(y), & u^\calN(s) &\coloneqq \sum_{j=1}^\calN u^\calN_j(s) \psi_j \in V 
\end{align}
We emphasize that the $y$-discretization does not suffer from any numerical instabilities as $s \uparrow 1$ or $s \downarrow 0$: the weights $\tau_{j,\pm}$ are positive, no larger than 1, and independent of $s$, $\beta_0(1-s)$ and $\beta_0(s)$ are just sinc functions, and $w_{\pm}(y)$ has bounded $L^2$ norm for all $y \geq 0$, i.e., for all inputs. The following codifies this stability.
\begin{proposition}\label{prop:u-stability}
  Assume $f \in L^2(\Omega)$. Then 
  \begin{align}\label{eq:uN-stability}
    \sup_{s \in (0,1)} \| u^\calN(s) \| &\leq \frac{4 \widetilde{C}_\Omega^2}{\pi} \| f\|,
  \end{align}
\end{proposition}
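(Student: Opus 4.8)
The plan is to mirror the proof of Proposition \ref{prop:u-stable} exactly, replacing the continuous $L^2$ stability bounds \eqref{eq:cpm} with the discrete ones \eqref{eq:wl2}, and replacing $\widetilde{C}_\Omega^2$ by $\max(1, C_\calN^2)$ wherever it appears, with a final application of the inequality $\max(1, C_\calN^2) \leq \widetilde{C}_\Omega^2$ from \eqref{eq:CNtilde-def} to obtain the stated bound. Since $u^\calN(s)$ as defined in \eqref{eq:gq-truth-solution} is just the quadrature discretization of $\widetilde{u}^\calN(s)$, the only genuinely new ingredient relative to the semi-discrete proposition is controlling the effect of the quadrature rule, and the key fact making this painless is that the $W$-Gaussian weights $\tau_{j,\pm}$ are non-negative and sum to one.

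Concretely, first I would take the $L^2$ norm of $u^\calN(s)$ from \eqref{eq:gq-truth-solution} and apply the triangle inequality to pass the norm through both finite sums, giving
\begin{align*}
  \| u^\calN(s) \| &\leq \beta_0(s_-) \sum_{j=1}^{M_-} \tau_{j,-} \left\| w^\calN_-\!\left(\frac{y_{j,-}}{s_-}\right) \right\| + \beta_0(s_+) \sum_{j=1}^{M_+} \tau_{j,+} \left\| w^\calN_+\!\left(\frac{y_{j,+}}{s_+}\right)\right\|.
\end{align*}
Second, I would bound each norm $\| w^\calN_-(\cdot) \|$ by $C_\calN^2 \|f\|$ using \eqref{eq:wl2-b} and each $\| w^\calN_+(\cdot)\|$ by $\|f\|$ using \eqref{eq:wl2-a}; these bounds are uniform in the argument, so the particular quadrature nodes $y_{j,\pm}/s_\pm \geq 0$ are irrelevant. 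Third, since $\sum_j \tau_{j,\pm} = 1$ (the weights integrate the constant $1$ exactly, and $W$ is a probability density), the two sums collapse and we get $\| u^\calN(s) \| \leq \max(1, C_\calN^2)\, \|f\| \left[ \beta_0(s_-) + \beta_0(s_+) \right]$.

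Finally, I would invoke the elementary identity $\beta_0(s_-) + \beta_0(s_+) = \dfrac{\sin(\pi s)}{\pi s(1-s)} \leq \dfrac{4}{\pi}$ for $s \in (0,1)$ — exactly as at the end of the proof of Proposition \ref{prop:u-stable} — together with $\max(1, C_\calN^2) \leq \widetilde{C}_\Omega^2$ from \eqref{eq:CNtilde-def}, to conclude $\sup_{s\in(0,1)} \| u^\calN(s) \| \leq \frac{4\widetilde{C}_\Omega^2}{\pi}\|f\|$. There is no real obstacle here: the whole argument is a routine adaptation, and the only point requiring a moment's care is recording that the quadrature step introduces no new constant precisely because the Gaussian weights are a probability vector — so the discrete estimate is genuinely independent of $M_\pm$ (and of all other discretization parameters) as claimed.
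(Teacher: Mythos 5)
Your proposal is correct and follows essentially the same route as the paper's own proof: take the norm of \eqref{eq:gq-truth-solution}, apply the triangle inequality together with the discrete stability bounds \eqref{eq:wl2}, exploit that the Gaussian weights are nonnegative and sum to one, and finish with $\beta_0(s_-)+\beta_0(s_+)\leq 4/\pi$ and \eqref{eq:CNtilde-def}. Your explicit use of $\sum_j \tau_{j,\pm}=1$ is in fact a slightly cleaner statement of the paper's observation that $0\leq\tau_{j,\pm}\leq 1$, but the argument is the same.
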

\begin{proof}
  The proof is very similar to the proof for proposition \eqref{prop:u-stable}. Take the $\|\cdot\|_{\bs{M}}$ norm on both sides of \eqref{eq:gq-truth-solution}, use the triangle inequality and \eqref{eq:wl2}, and note that the quadrature weights $\tau_{j,\pm}$ all satisfy $0 \leq \tau_{j,\pm} \leq 1$ since they are all positive and $W$ is a probability density. Note that \eqref{eq:uN-stability} holds for any quadrature rule for the $y$ variable if augmented by a multiplicative constant equal to the quadrature condition number (sum of absolute value of weights).
\end{proof}
Just as in \cite{bonito_numerical_2015}, assuming we have the ability to compute $\bs{w}^\calN_{\pm}$, then the formulation above immediately yields an algorithm to compute $u^\calN(s)$ via \eqref{eq:gq-truth-solution}.
\begin{remark}\label{rem:general-a}
  All of our results extend to the case when we solve \eqref{eq:fpde} but replacing $-\Delta$ with a general elliptic operator $\mathcal{E}$ that (i) satisfies the coercivity condition $\left\langle \mathcal{E} v, v \right\rangle \geq \alpha \| v \|_{H_0^1}$ for $\alpha > 0$ and (ii) can be associated with a symmetric variational bilinear form. In this more general case, we need only replace all the instances of $\widetilde{C}^2_\Omega$ with $\widetilde{C}^2_\Omega / \alpha$. The matrix $\bs{S}$ should likewise be replaced with the associated matrix defined from the bilinear form of $\mathcal{E}$.
\end{remark}



\subsection{Algorithm summary}\label{ssec:gq:algorithm}
The sections above identify an algorithm for computing $u^\calN(s)$ in \eqref{eq:gq-truth-solution}. We summarize the procedure in Algorithm \ref{alg:GQ}. The discrete solution adheres to the stability bound in Proposition \ref{prop:u-stability}. Note that we have not yet described how one should decide on values for $M_{\pm}$. We provide a concrete computational strategy for accomplishing this in the next section. However, one of our goals in our numerical results section is to compare our algorithm to existing ones, which determine $M_\pm$ using \eqref{eq:N-quad}. 
\begin{algorithm}
  \caption{GQ algorithm: Produces solution to the fractional Laplace problem \eqref{eq:fpde}.
    \label{alg:GQ}}
  \begin{algorithmic}[1]
    \Require{Availability of a discrete solution $\bs{w}^{\calN}(\alpha,\beta)$ from the formulation \eqref{eq:w-galerkin}.}
    \Statex
    \Function{FracLapGQ}{$s$}
      \State Determine $M_{\pm}$, e.g., via \eqref{eq:N-quad}.
      \State Generate quadrature rules $\left( y_{j,\pm}, \tau_{j,\pm} \right)_{j=1}^{M_\pm}$ using \eqref{eq:gq-nodes-weights}.
      \For{$j \gets 1 \textrm{ to } M_-$}
      \State Compute $\bs{w}^{\calN}_-\left(\frac{y_{j,-}}{1-s}\right) = \bs{w}^\calN\left(1, \exp\left(-\frac{y_{j,-}}{1-s}\right)\right)$ from \eqref{eq:wpm-discrete-def} or \eqref{eq:truth-discretization}.
      \EndFor
      \For{$j \gets 1 \textrm{ to } M_+$}
      \State Compute $\bs{w}^{\calN}_+\left(\frac{y_{j,+}}{s}\right) = \bs{w}^\calN\left(\exp\left(-\frac{y_{j,+}}{s}\right),1\right)$ from \eqref{eq:wpm-discrete-def} or \eqref{eq:truth-discretization}.
      \EndFor
      \State Compute $\bs{u}^\calN(s)$ from \eqref{eq:gq-truth-solution}.
    \State \Return{$\bs{u}^\calN(s)$}
    \EndFunction
  \end{algorithmic}
\end{algorithm}

\section{Error due to quadrature discretization}\label{sec:quad}
The formulation \eqref{eq:gq-truth-solution} is our numerical approximation to compute solutions to \eqref{eq:fpde}. This formulation is a discretization over both the $x$ and $y$ variables (via a finite element formulation and a quadrature rule, respectively). To understand the error that the $y$ discretization contributes, we analyze the discrepancy between $\widetilde{u}^\calN(s)$ and $u^\calN(s)$.

To proceed, we need an auxiliary function that measures the absolute error between a size-$M$ quadrature rule $\left(y_j, \tau_j\right)_{j=1}^M$ and the exact integral applied to a particular function:
\begin{align}\label{eq:g-def}
  g_M(a, b) &\coloneqq \left| \int_0^\infty \frac{W(y)}{1 + a e^{-b y}} \dx{y} - \sum_{j=1}^{M} \frac{\tau_j}{1 + a e^{-b y_j}} \right|, & (a,b) \in (0, \infty) \times (1, \infty).
\end{align}
We also need to define intervals on the real line enclosing the spectrum of some discretized operators. Recalling the definitions of $C_\calN$ and $K_\calN$ in \eqref{eq:CN-def} and \eqref{eq:KN-def}, respectively, define intervals $I$ and $I_{\pm}$ as
\begin{align*}
  I_- &= \left[ K^2_\calN, C^2_\calN \right] \subset (0, \infty), &
  I_+ &= \left[ \frac{1}{C_\calN^2}, \frac{1}{K_\calN^2} \right] \subset (0, \infty), &
  I &= I_- \bigcup I_+
\end{align*}
The error committed by the quadrature rule can be understood by studying the quantity,
\begin{align}\label{eq:G-def}
  G_{\pm}(M,s) \coloneqq \beta_0\left(s_{\pm}\right) \sup_{a \in I_{\pm}} g_M\left(a, \frac{1}{s_\pm}\right).
\end{align}
The precise statement is as follows.
\begin{proposition}\label{prop:qerror}
  Assume $f \in L^2$.  Then for each $s \in (0,1)$,
  \begin{align}\label{eq:quad-error}
    \left\| u^\calN(s) - \widetilde{u}^\calN(s) \right\| \leq \widetilde{C}_\Omega^2 \|f\| \sum_{\sigma \in \{+,-\}} G_\sigma\left(M_\sigma,s\right) 
  \end{align}
  and therefore,
  \begin{align}\label{eq:sup-quad-error}
    \sup_{s \in (0,1)} \left\| u^\calN(s) - \widetilde{u}^\calN(s) \right\| \leq \frac{4\widetilde{C}_\Omega^2 \|f\|}{\pi} \max_{\sigma \in \{+,-\}} \sup_{a \in I, b \in (1, \infty)} g_{M_\sigma}(a,b).
  \end{align}
\end{proposition}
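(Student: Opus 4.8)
The plan is to express the difference $u^\calN(s) - \widetilde{u}^\calN(s)$ directly in terms of the quadrature errors on the two scalar integrands appearing in the eigenfunction basis of the generalized eigenvalue problem $(\bs{S},\bs{M})$. First I would diagonalize: since $\bs{S}$ and $\bs{M}$ are symmetric positive-definite, there is a basis $\{\bs{z}_k\}_{k=1}^\calN$, orthonormal in the $\bs{M}$-inner product, with $\bs{S}\bs{z}_k = \mu_k \bs{M}\bs{z}_k$, where each $\mu_k \in [1/C_\calN^2,\, 1/K_\calN^2]$ by the definitions \eqref{eq:CN-def} and \eqref{eq:KN-def}. Writing $\bs{f} = \sum_k c_k \bs{M}\bs{z}_k$, one gets from \eqref{eq:wpm-discrete-def} that $\bs{w}^\calN_+(y) = \sum_k \frac{c_k}{e^{-y}\mu_k + 1}\bs{z}_k$ and $\bs{w}^\calN_-(y) = \sum_k \frac{c_k}{\mu_k + e^{-y}}\bs{z}_k = \sum_k \frac{c_k/\mu_k}{1 + e^{-y}/\mu_k}\bs{z}_k$. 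Thus, componentwise in this basis, each coordinate of $\bs{w}^\calN_\sigma(y/s_\sigma)$ is exactly of the form (constant) $\times \frac{1}{1 + a e^{-b y}}$ with $b = 1/s_\sigma > 1$ and $a$ either $\mu_k$ or $1/\mu_k$, i.e. $a \in I_+$ for $\sigma=+$ and $a\in I_-$ for $\sigma=-$ (using $I_- = [K_\calN^2, C_\calN^2]$, which is exactly the range of $1/\mu_k$).

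Next, subtracting \eqref{eq:gq-truth-solution} from \eqref{eq:u-semidiscrete} and using $\bs{M}$-orthonormality of $\{\bs{z}_k\}$, I would bound
\begin{align*}
  \| u^\calN(s) - \widetilde{u}^\calN(s)\|^2 = \sum_{k} \left( \sum_{\sigma} \beta_0(s_\sigma) c_k^{(\sigma)} \Big[ \textstyle\int_0^\infty \frac{W(y)}{1 + a_k^{(\sigma)} e^{-y/s_\sigma}}\dx{y} - \sum_j \frac{\tau_{j,\sigma}}{1 + a_k^{(\sigma)} e^{-y_{j,\sigma}/s_\sigma}} \Big] \right)^2,
\end{align*}
where $c_k^{(+)} = c_k$, $c_k^{(-)} = c_k/\mu_k$, $a_k^{(+)} = \mu_k$, $a_k^{(-)} = 1/\mu_k$. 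The bracketed term is at most $g_{M_\sigma}(a_k^{(\sigma)}, 1/s_\sigma) \le \sup_{a \in I_\sigma} g_{M_\sigma}(a, 1/s_\sigma)$ by \eqref{eq:g-def}, so pulling the sup out of the $k$-sum and using $\beta_0(s_\sigma)\sup_{a\in I_\sigma} g_{M_\sigma}(a,1/s_\sigma) = G_\sigma(M_\sigma, s)$ gives, after a triangle inequality over $\sigma$, a bound of the form $\big(\sum_\sigma G_\sigma(M_\sigma,s)\, \|\bs{c}^{(\sigma)}\|\big)$ where $\|\bs{c}^{(+)}\|^2 = \sum_k c_k^2 = \|\bs{f}\|_{\bs{M}^{-1}}^2 \le \|f\|^2$ and $\|\bs{c}^{(-)}\|^2 = \sum_k c_k^2/\mu_k^2 \le C_\calN^4 \|f\|^2$ since $1/\mu_k \le C_\calN^2$. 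To land exactly on \eqref{eq:quad-error} with the common factor $\widetilde C_\Omega^2\|f\|$ I would note $\max(1, C_\calN^2) \le \widetilde C_\Omega$ via \eqref{eq:CNtilde-def} — actually one needs $C_\calN^2 \le \widetilde C_\Omega^2$ and $1 \le \widetilde C_\Omega^2$, both from \eqref{eq:comega-def}–\eqref{eq:CNtilde-def} — so each term is $\le \widetilde C_\Omega^2 \|f\|\, G_\sigma(M_\sigma, s)$, which sums to \eqref{eq:quad-error}.

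For the second statement \eqref{eq:sup-quad-error}, I would take $\sup_{s\in(0,1)}$ of \eqref{eq:quad-error}. The key observations are: (i) as $s$ ranges over $(0,1)$, $s_\pm$ ranges over $(0,1)$, so $1/s_\pm$ ranges over $(1,\infty)$; hence $\sup_s \sup_{a\in I_\pm} g_{M_\sigma}(a, 1/s_\pm) \le \sup_{a \in I, b\in(1,\infty)} g_{M_\sigma}(a,b)$, using $I_\pm \subset I$; and (ii) $\beta_0(s_\sigma) = \sinc(s_\sigma) \le 1$, but more carefully the combined factor is controlled by the estimate already used in Proposition \ref{prop:u-stable}, namely $\beta_0(s_-) + \beta_0(s_+) = \frac{\sin\pi s}{\pi s(1-s)} \le \frac{4}{\pi}$. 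Combining, $\sup_s \sum_\sigma G_\sigma(M_\sigma,s) \le \frac{4}{\pi}\max_\sigma \sup_{a\in I, b\in(1,\infty)} g_{M_\sigma}(a,b)$, which together with the factor $\widetilde C_\Omega^2\|f\|$ from \eqref{eq:quad-error} yields \eqref{eq:sup-quad-error}.

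The main obstacle I anticipate is bookkeeping rather than conceptual: correctly tracking the two different scalar multipliers ($c_k$ versus $c_k/\mu_k$) and the two different parameter ranges ($a = \mu_k \in I_+$ versus $a = 1/\mu_k \in I_-$) so that the $w_-$ contribution genuinely produces the factor $C_\calN^2$ (absorbed into $\widetilde C_\Omega^2$) while the $w_+$ contribution produces only $1$, and making sure the interval $I_-$ is matched to the \emph{reciprocal} eigenvalues $1/\mu_k$ rather than $\mu_k$ — this is the one place where getting the direction of an inequality wrong would break the clean statement. A secondary subtlety is justifying the $\sup$-over-$s$ interchange in step (ii): one should not bound $\beta_0(s_\pm)$ termwise by $1$ (which would give $8/\pi$), but rather keep the sum $\beta_0(s_-)+\beta_0(s_+)$ together to extract the sharp constant $4/\pi$, exactly as in the proof of Proposition \ref{prop:u-stable}.
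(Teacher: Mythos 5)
Your proposal is correct and follows essentially the same route as the paper: the paper diagonalizes $\bs{M}^{-1/2}\bs{S}\bs{M}^{-1/2}$ (whose eigenvalues are exactly your generalized eigenvalues $\mu_k$ of $(\bs{S},\bs{M})$), reduces each component to the scalar quadrature error $g_{M_\sigma}(a,1/s_\sigma)$ with $a=\mu_k\in I_+$ for the ``$+$'' part and $a=1/\mu_k\in I_-$ for the ``$-$'' part, and extracts the factor $1/\mu_k\leq C_\calN^2\leq \widetilde{C}_\Omega^2$ from the ``$-$'' contribution exactly as you do. The only cosmetic difference is that the paper bounds the diagonal error matrix in operator norm via submultiplicativity rather than expanding $\bs{f}$ in the eigenbasis and using Parseval, and your derivation of \eqref{eq:sup-quad-error} via $\beta_0(s_-)+\beta_0(s_+)\leq 4/\pi$ is the intended one.
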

\begin{proof}
  The same argument that produces the relations \eqref{eq:lemma-dc-norms} implies that 
  \begin{align*}
    \left\| u^\calN(s) - \widetilde{u}^\calN(s) \right\|_{L^2} = \left\| \bs{u}^\calN(s) - \bs{\widetilde{u}}^\calN(s) \right\|_{\bs{M}},
  \end{align*}
  so we proceed to study the quantity on the right-hand side. The difference between the ``$-$" integral contributions in $\bs{u}^\calN(s) - \bs{\widetilde{u}}^\calN(s)$ is proportional to 
  \begin{align*}
    \int_0^\infty \bs{w}^\calN_-\left(\frac{y}{s_-}\right) W(y) \dx{y} - \sum_{j=1}^{M_-} \bs{w}^\calN_-\left( \frac{y_{j,-}}{s_-} \right) \tau_{j,-}.
  \end{align*}
  We can express the solution $\bs{w}_-^\calN(y)$ as 
  \begin{align*}
    \bs{w}_-^\calN(y) &= \left[ \bs{S} + e^{-y/s_-} \bs{M} \right]^{-1} \bs{f} = \bs{M}^{-1/2} \left[ \bs{A} + e^{-y/s_-} \bs{I} \right]^{-1} \bs{M}^{-1/2},
  \end{align*}
  where we have defined $\bs{A} \coloneqq \bs{M}^{-1/2} \bs{S} \bs{M}^{-1/2}$. The matrix $\bs{A}$ is symmetric and positive definite, and thus has an eigenvalue decomposition
  \begin{align*}
    \bs{A} &= \bs{W} \bs{T} \bs{W}^T, & \bs{W} \bs{W}^T = \bs{I}.
  \end{align*}
  Then further manipulation of the $\bs{w}_-^\calN(y)$ expression yields
  \begin{align*}
    \bs{w}_-^\calN(y) &= \bs{M}^{-1/2} \bs{W} \bs{H}\left(\frac{y}{s_-}\right) \bs{W}^T \bs{M}^{-1/2},
  \end{align*}
  where $\bs{H}(y)$ is a diagonal matrix having entries
  \begin{align*}
    \left(\bs{H}(y) \right)_{j,j} = \frac{1}{t_j + e^{-y}} = \frac{1}{t_j} \left[ 1 + \frac{1}{t_j} e^{-y} \right]^{-1}.
  \end{align*}
  Thus, 
  \begin{align*}
    \bigg\| \int_0^\infty &\bs{w}^\calN_-\left(\frac{y}{s_-}\right) \dx{y} - \sum_{j=1}^{M_-} \bs{w}^\calN_-\left( \frac{y_{j,-}}{s_-} \right) \tau_{j,-} \bigg\|_{\bs{M}} \\
                          &\leq \left\| \bs{W} \left[ \int_0^\infty \bs{H}(y/s_-) W(y) \dx{y}  - \sum_{j=1}^{M_-} \tau_{j,-} \bs{H}\left(\frac{y_{j,-}}{s_-}\right)\right] \bs{W}^T \right\|  \left\| \bs{M}^{-1/2} \bs{f} \right\| \\ 
    &= \left\| \left[ \int_0^\infty \bs{H}(y/s_-) W(y) \dx{y}  - \sum_{j=1}^{M_-} \tau_{j,-} \bs{H}\left(\frac{y_{j,-}}{s_-}\right)\right] \right\| \left\| \bs{f} \right\|_{\bs{M}^{-1}} \\
    &\stackrel{\eqref{eq:lemma-dc-norms}}{\leq} \left\| \left[ \int_0^\infty \bs{H}(y/s_-) W(y) \dx{y}  - \sum_{j=1}^{M_-} \tau_{j,-} \bs{H}\left(\frac{y_{j,-}}{s_-}\right)\right] \right\| \|f\|_{L^2} \\
    &= \left\| f\right\|_{L^2} \max_{j=1,\ldots,\N} \frac{1}{t_j} g_-\left(\frac{1}{t_j}, \frac{1}{s_-} \right),
  \end{align*}
  The first inequality is sub-multiplicativity of the $\|\cdot\|$ matrix norm, and the first equality uses the invariance of the same norm under unitary transformations. A similar computation for the ``$+$" quantities yields
  \begin{align*}
    \left\| \int_0^\infty \bs{w}^\calN_+\left(\frac{y}{s_+}\right) \dx{y} - \sum_{j=1}^{M_+} \bs{w}^\calN_+\left( \frac{y_{j,+}}{s_+} \right) \tau_{j,+} \right\|_{\bs{M}} &\leq \left\| f\right\|_{L^2} \max_{j \in [\calN]} g_+\left(t_j, \frac{1}{s_+} \right) .
  \end{align*}
  The combination of these results implies
  \begin{align*}
    \left\| u^\calN(s) - \widetilde{u}^\calN(s) \right\|_{L^2} &\leq \beta_0(s_-) \left\| \int_0^\infty \bs{w}^\calN_-\left(\frac{y}{s_-}\right) \dx{y} - \sum_{j=1}^{M_-} \bs{w}^\calN_-\left( \frac{y_{j,-}}{s_-} \right) \tau_{j,-} \right\|_{\bs{M}} \|f\|_{L^2} \\ 
                                                               &+ \beta_0(s_+) \left\| \int_0^\infty \bs{w}^\calN_-\left(\frac{y}{s_+}\right) \dx{y} - \sum_{j=1}^{M_+} \bs{w}^\calN_+\left( \frac{y_{j,+}}{s_+} \right) \tau_{j,+} \right\|_{\bs{M}} \|f\|_{L^2} \\
                                                               &\leq \frac{\|f\|_{L^2}}{\lambda_{\mathrm{min}}\left(\bs{S}, \bs{M}\right)} \sup_{a \in I_-} g_-\left( a, \frac{1}{s_-} \right) + \|f\|_{L^2} \sup_{a \in I_+} g_+\left( a, \frac{1}{s_+}\right).
  \end{align*}
  Using the inequality \eqref{eq:CNtilde-def} yields the result.
\end{proof}
The summation on the right-hand side of \eqref{eq:quad-error} can be computed independent of the data $f$, and requires only knowledge of the extremal eigenvalues of the discrete operator, cf. \eqref{eq:CN-def} and \eqref{eq:KN-def}. While we cannot at present provide a theoretical estimate of this error, we numerically investigate the behavior of this error on $M_{\pm}$ in our numerical results section.
\begin{remark}
  Comparing \eqref{eq:quad-error} with the stability bound \eqref{eq:uN-stability} suggests that many of the factors in \eqref{eq:quad-error} appear due to bounding the error relative to $\| \widetilde{u}^\calN \|_{L^2}$. Thus the supremum over $g$ is the factor that arises due to the quadrature error.
\end{remark}
\begin{remark}
  The result \eqref{eq:quad-error} also shows that the error between $u^\calN(s)$ and $\widetilde{u}^\calN(s)$ is stable independent of $s$ since
  \begin{align*}
    g_M(a,b) \leq \left| \int_0^\infty W(y) \right| + \left| \sum_{j=1}^{M} \tau_{j} \right| = 2,
  \end{align*}
  uniformly in $a$, $b$, and $M$. Thus, 
  \begin{align*}
    \sup_{s \in (0,1)} \left\| u^\calN(s) - \widetilde{u}^\calN(s) \right\|_{L^2} \leq \frac{8 \widetilde{C}_\Omega^2}{\pi} \| f\|_{L^2}.
  \end{align*}
  This again suggests that, independent of all discretization parameters, our numerical algorithm is stable. However, a rigorous convergence analysis for our quadrature rule is not yet available.
\end{remark}

\subsection{Empirical behavior of quadrature error}
The main result from Proposition \ref{prop:qerror} is that the error in the fully discrete approximation \eqref{eq:gq-truth-solution} that is due to the $y$-quadrature discretization is computable without solving any PDE's, assuming that the extremal generalized eigenvalues of $(\bs{S}, \bs{M})$, coded in the quantities $C_\calN$ and $K_\calN$, are known. In particular, this implies that most details of the spatial discretization need not be utilized to understand the quadrature error; we only require extremal eigenvalues of discretized operators. 

We empirically investigate the accuracy of the quadrature rule in this section. Throughout our tests, we will use the following values:
\begin{align*}
  C^2_{\calN} = 2 &\Longleftrightarrow \lambda_{\mathrm{min}}\left(\bs{S}, \bs{M}\right) = \frac{1}{2}, & 
  K^2_{\calN} = \frac{1}{10^6} &\Longleftrightarrow \lambda_{\mathrm{max}}\left(\bs{S}, \bs{M}\right) = 10^6.
\end{align*}
Since $C_\calN$ is bounded above by the analytical Poincar\'e constant of the domain $C_\Omega$, then choosing this $\mathcal{O}(1)$ quantity for $C_\calN$ is reasonable. The choices above make the intervals $I_{\pm}$ defined in Proposition \ref{prop:qerror} explicit. The finite element discretization from our numerical experiments in Section \ref{sec:results} results in values $C^2_\calN = 0.0506$ and $K^2_\calN = 2.36 \times 10^{-6}$.

We note that $G_{\pm}$ in \eqref{eq:G-def} can be numerically approximated for each $(M,s)$ by replacing the supremum over $a$ with the maximum over a discrete mesh. 
We compute the supremums in $G_\pm$ by discretizing the intervals $I_\pm$ with $200$ logarithmically spaced points. (I.e., $\log I_{\pm}$ is replaced with 200 equispaced points.) This discretization then allows us to compute $G_{\pm}$, and hence allows us to compute approximations to the bound in \eqref{eq:quad-error}. In Figure \ref{fig:Gvals}, we show the behavior of $G_{\pm}$ as a function of $(M,s)$. We note that ensuring small values of $G_+$ requires more quadrature points when $s$ is close to 0. In contrast, controlling $G_-$ requires more quadrature points when $s$ is close to 1. However, the behavior of $G_+$ for small $s_+ = s$ is more restrictive than the behavior of $G_-$ for small $s_- = 1-s$. Thus, we expect that $G_+$ is the term that requires more computational investment to guarantee a certain error level.

\begin{figure}[htbp]
  \begin{center}
    \resizebox{0.85\textwidth}{!}{
      \includegraphics{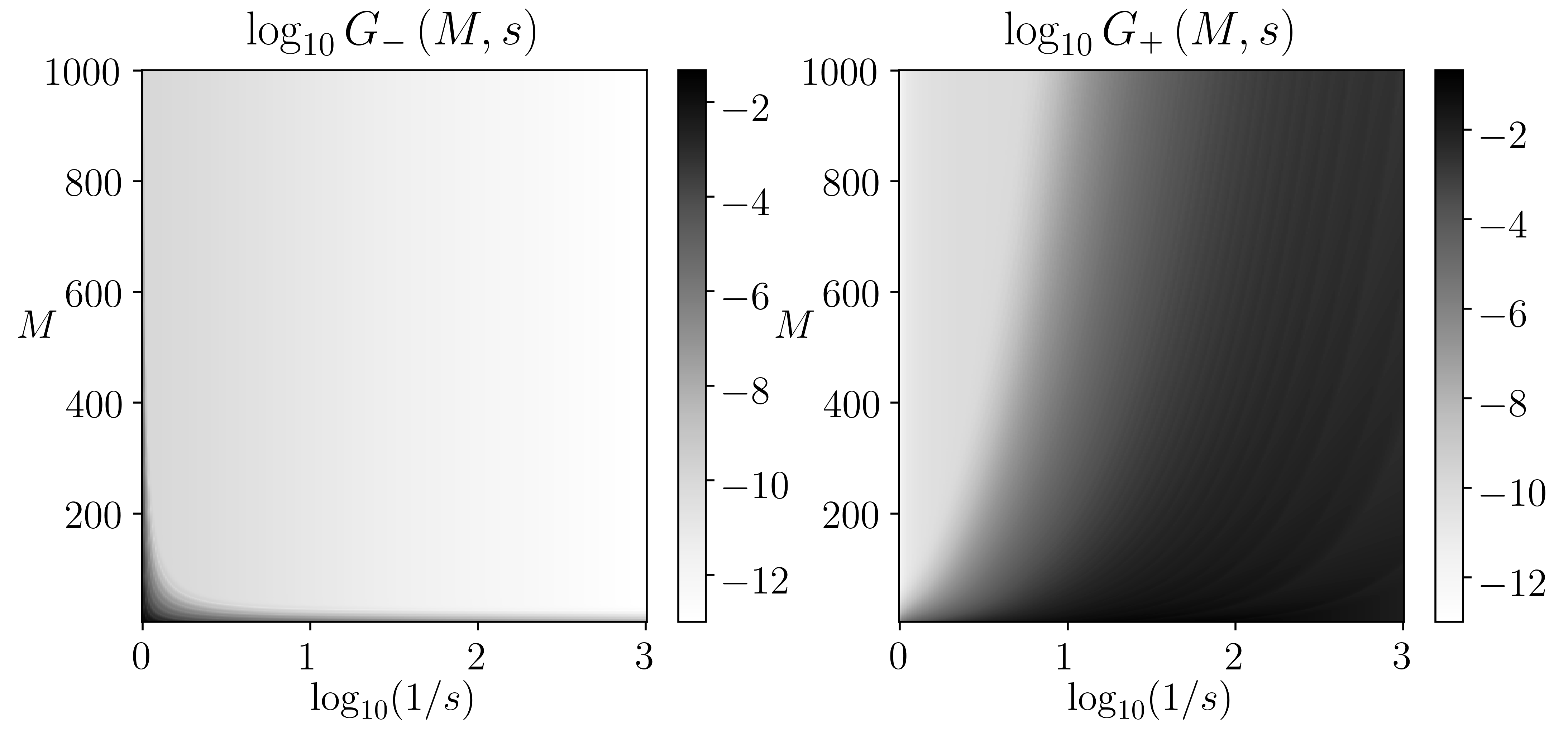}
    }
  \end{center}
  \caption{\label{fig:Gvals} Values of $\log_{10} G_{\pm}$ defined in \eqref{eq:G-def} as a function of $(M,s)$. We show $s$-dependence as $\log_{10}(1/s)$ since the error behavior for small $s$ is the most restrictive. We observe that, for fixed $M$, $G_{\pm}$ has large values when $s_\pm$ is small.}
\end{figure}

To explore this further, we define the smallest value\footnote{To avoid computational effects of oscillating errors due to, e.g., even/odd parity of the quadrature rule, we actually compute the smallest value of $M$ so that $M$, $M+1$, $M+2$, and $(M+3)$-point quadrature rules all achieve the stated accuracy requirement.} of $M$ needed to assure a given error level $\delta$:
\begin{align}\label{eq:Mtilde-def}
  \widetilde{M}(\delta,s) &\coloneqq \widetilde{M}_-\left(\delta,s\right) + \widetilde{M}_+\left(\delta, s\right), \\
  \widetilde{M}_{\pm}\left(\delta,s\right) &\coloneqq \min \left\{ M \in \N \; \big| \; G_{\pm}\left(M,s\right) \leq \frac{\delta}{2}\right\}.
\end{align}
For $\delta = 10^{-2}$, $10^{-4}$, and $10^{-6}$, we display the values of these $\widetilde{M}$ quantities in Figure \ref{fig:Mvals}. We see that for small values of $s$, the requisite number of points $\widetilde{M}$ scales like $1/s$. In particular, for small $s$, more effort (quadrature points) is allocated to $\widetilde{M}_+$, but for small $1-s$, comparatively more effort is allocated to $\widetilde{M}_-$.
\begin{figure}[htbp]
  \begin{center}
    \resizebox{\textwidth}{!}{
      \includegraphics{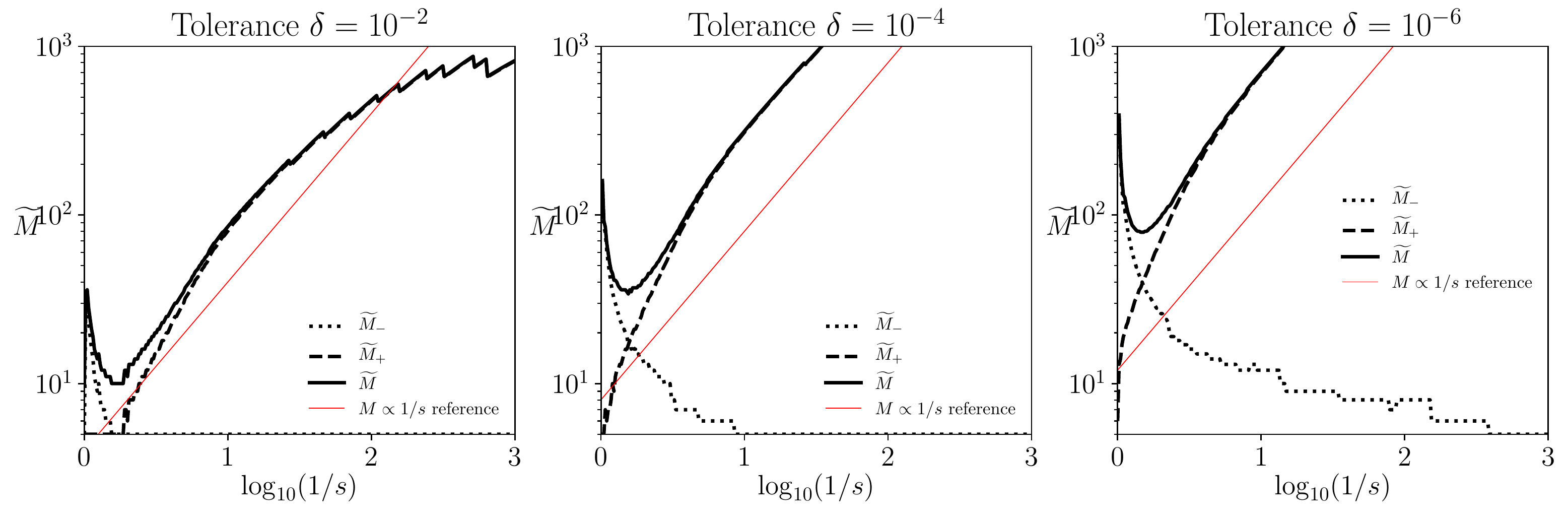}
    }
  \end{center}
  \caption{\label{fig:Mvals} Values of $\widetilde{M}$ and $\widetilde{M}_\pm$ defined in \eqref{eq:Mtilde-def} for various values of the tolerance $\delta$. We show $s$-dependence as $\log_{10}(1/s)$ since the error behavior for small $s$ is most restrictive. For visual reference, a $1/s$ curve is also plotted. We see that for small values of $s_{\pm}$, the corresponding value of $\widetilde{M}_{\pm}$ is large.}
\end{figure}
Therefore, the number of quadrature queries $M_- + M_+$ in the fully discrete scheme \eqref{eq:gq-truth-solution} can be quite large when the fractional order $s$ is very small. This general observation, including the $1/s$-type behavior shown in Figure \ref{fig:Mvals}, is consistent with earlier work \cite{antil_short_2017}. Thus, the number of classical PDE solutions $M = M_- + M_+$ needed to compute an accurate solution is large. This motivates a need to make these solves more efficient; we achieve this in the next section via model reduction.

\section{Model reduction for the integral formulation}\label{sec:rbm}
This section proposes an augmentation of the algorithm in the previous section. The cost of computing the fully discrete solution \eqref{eq:gq-truth-solution} is essentially $M_- + M_+$ queries of finite element solvers for $\bs{w}_{\pm}^\calN$. In practice one can require $M_- + M_+ \sim 100$, cf. Figure \ref{fig:Mvals} and earlier work \cite{bonito_numerical_2015, CWeiss_BvBWaanders_HAntil_2018a}, resulting in a substantial computational cost if the cost of computing $\bs{w}_{\pm}^\calN$ is high. 

We observe that the formulations \eqref{eq:wpm-continuous-def} for $w_{\pm}$ (and also \eqref{eq:wpm-discrete-def} for the discrete counterparts $\bs{w}_{\pm}^\calN$) are quintessential examples of parameterized PDE's where RBM algorithms are used to accelerate solution queries. Thus, RBM can be used to ameliorate the cost of performing $M_- + M_+$ queries of these PDE's. In RBM terminology, an available expensive discrete solution is called a \textit{truth} solution. Thus, our truth solutions for the auxiliary PDE problem for $w_{\pm}$ are $w^\calN_{\pm}$ defined in \eqref{eq:wpm-discrete-def}. The associated truth solution for $u(s)$ is \eqref{eq:gq-truth-solution}. The purpose of RBM procedures is to diminish the cost of evaluating the truth solution.

\subsection{Reduced basis methods}
Let $\mathcal{L}$ be a (classical) differential operator, and consider the following PDE parameterized by a Euclidean parameter $y \in D \subset \R^p$:
\begin{align}\label{eq:rbm-problem}
  \mathcal{L}(w; x; y) &= f(x;y), & (x,y) &\in \Omega \times D \subset \R^d \times \R^p,
\end{align}
where $x$ is the spatial variable and $y$ is a parameter. The operator $\mathcal{L}$ is differential in the $x$ variable. For example, the PDE defining $w(y)$ from \eqref{eq:aux-pde} can be written as \eqref{eq:rbm-problem} with the operator,
\begin{align*}
  \mathcal{L} &= -\alpha \Delta + \beta I, & p &= 2, & y = (\alpha, \beta) \in D &=  (0, 1]^2.
\end{align*}
We assume that \eqref{eq:rbm-problem} is well-posed for each $y \in D$. For a fixed $y$, one usually develops an $x$-discretization with $\calN \gg 1$ degrees of freedom yielding a solution $w^\calN$ with membership in an $\calN$-dimensional subspace. For us, this is the discretization defined in section \ref{ssec:space-discretization}. We assume that $\calN$ is large enough so that
\begin{align*}
  \sup_{y \in D} \| w^\calN(y) - w(y) \|_{L^2} \leq \epsilon,
\end{align*}
where $\epsilon$ is a user-prescribed tolerance. Thus, the map $y \mapsto w^\calN(y) \approx w(y)$ requires algorithms whose complexity is dependent on $\calN$.\footnote{For linear elliptic operators $\mathcal{L}$, this complexity can in principle scale like $\mathcal{O}\left(\calN \log \calN\right)$, but frequently is $\mathcal{O}(\calN^2)$, or even $\mathcal{O}(\calN^3)$ depending the details of the employed numerical solver.} Such an algorithm that performs the operation $y \mapsto w^\calN(y)$ is called a \textit{truth} approximation or solver.

The reduced basis method (RBM) is a thematic collection of model reduction strategies for parameterized PDEs that compute an emulator $y \mapsto w_N(y) \approx w^\calN(y)$, whose complexity behaves like $\mathcal{O}(N^3)$ or $\mathcal{O}(N^2)$, where $N \ll \calN$. For $N/\calN$ sufficiently small, this can result in an emulator $w_N$ whose evaluation is substantially cheaper than the truth approximation $w^\calN$. The RBM emulator takes the form,
\begin{align}\label{eq:rbm-def}
  w_N(y) &\coloneqq \sum_{k=1}^N c_{N,k}(y) \phi_k, & \phi_k &\in V_N \coloneqq \mathrm{span}\left\{ w^\calN(y_1), \ldots, w^\calN(y_N) \right\} \subset V,
\end{align}
where $\{y_k\}_{k=1}^N$ are particular parameter values that are chosen during the RBM construction procedure. The success of RBM algorithms rely on three main components:
\begin{itemize}
  \item The condition that the \textit{manifold} of solutions,
    \begin{align*}
      W(D) \coloneqq \left\{ w(y) \; \big|\; y \in D \right\} \subset L^2(\Omega),
    \end{align*}
    is ``low rank". The mathematically precise statement of this is that the Kolmogorov $N$-width of the manifold,
    \begin{align*}
      d_N(W) \coloneqq \inf_{\substack{V \in L^2 \\ \dim V = N}} \sup_{y \in D} \inf_{v \in V} \left\| w(y) - v \right\|_{L^2(\Omega)},
    \end{align*}
    decays quickly with $N$. ``Quickly" ideally means exponentially, but high algebraic rates of decay are also suitable. This condition ensures an RBM emulator $w_N$ can achieve $L^2$-proximity to the truth approximation $w^\calN$ when $N/\calN$ is very small. We provide empirical evidence in this paper that this condition is true, and show this rigorously in a follow-up paper \cite{antil_nwidth_2019}. 
  \item The condition that the truth approximation $w^\calN(y)$ comes with a practically computable \textit{a posteriori} error estimate $\Delta(y)$, satisfying,
    \begin{align*}
      \Delta(y) \gtrsim \| w(y) - w^\calN(y) \|_{L^2}.
    \end{align*}
    This usually comes in the form of \textit{a posteriori} finite element estimates, and in practice in the algorithm are actually used to measure $\|w^N(y) - w^\calN(y)\|$. This condition ensures that the parameter values $\{y_n\}_{n=1}^N$ in \eqref{eq:rbm-def} can be chosen in a computationally tractable manner. In our case the PDE's we consider are linear so that efficient residual-based error indicators $\Delta$ can be derived.
  \item The condition that the operator $\mathcal{L}$ and right-hand side $f$ have \textit{affine} dependence on the parameter $y$. This means that one has the expressions,
    \begin{align*}
      \mathcal{L} &= \sum_{q=1}^{Q_{\mathcal{L}}} \gamma_q(y) \mathcal{L}_q, & f(x;y) = \sum_{q=1}^{Q_f} \sigma_q(y) f_q(x),
    \end{align*}
    where we have introduced (i) $y$-independent differential operators $\mathcal{L}_q$, (ii) $y$-independent functions $f_q(x)$, (iii) $x$-independent functions $\gamma_q(y)$, and (iv) $x$-independent functions $\sigma_q(y)$. More precisely, one requires the weak (variational) form of $\mathcal{L}$ to have such a decomposition. This condition is needed so that evaluation of the RBM emulator map $y \mapsto u^N(y)$ can be accomplished using operations that are independent of the truth approximation discretization parameter $\calN$. We will briefly justify this for our situation in the next section. 
\end{itemize}
Our next goal is to apply the RBM algorithm to the truth discretizations of $w_{\pm}(y)$ that define the fractional solution $u(s)$. We discuss this in the next section.

\subsection{RBM formulation}
We describe here the RBM procedure for approximating $\bs{w}_-^\calN$ via a reduced basis emulator; the procedure for $\bs{w}_+^\calN$ is nearly identical. We recall the discrete truth approximation formulation that defines $\bs{w}_-^\calN$:
\begin{align}\label{eq:truth-w-}
  \left( \bs{S} + e^{-y} \bs{M}\right) \bs{w}^\calN_-(y) &= \bs{f}, 
\end{align}
where $\bs{S}$, $\bs{M}$, and $\bs{f}$ are defined in \eqref{eq:truth-discretization}. Detailed exposition of application of the RBM algorithm to this parameterized PDE (and to much more general cases) can already be found in existing textbook literature \cite{patera_reduced_2007,hesthaven_certified_2016,quarteroni_reduced_2016}. Here we give a only brief synopsis of the major steps in the algorithm for completeness, but refer to the previously-mentioned references for details and motivating explanation of the algorithm. In particular, in what follows we describe the algorithm using vectors and matrices instead of more common functional-analytic mathematical statements; this choice is made for simplicity of exposition since the algorithm itself is not new and we instead focus on the application of the algorithm.

As described in \eqref{eq:rbm-def}, the RBM method produces emulator $w_{n,-}$ defined as 
\begin{align}\label{eq:rbm-solution}
  w_{n,-}(y) &= \sum_{k=1}^n c_{n,k}(y) w_-^\calN(y_k), & \bs{c_n} &= \left( c_{n,1}, \ldots, c_{n,n} \right)^T,
\end{align}
where we have made a particular choice of the basis $\phi_k$ appearing in \eqref{eq:rbm-def}.\footnote{The parameter values $y_k$ and coefficients $c_{n,k}$ should be labeled $y_{k,-}$ and $c_{n,k,-}$, respectively, to differentiate them from the analogous quantities resulting from applying RBM to $w^\calN_+$. However, we omit this notational dependence for more clarity in exposition.}  Also, since the RBM procedure builds $w_N$ sequentially by first building $w_1, w_2, \ldots, $, we label the RBM dimension as $n$, satisfying $1 \leq n \leq N$, in this section. We must specify the parameter values $\{y_k\}_{k=1}^n$ and the coefficients $\{c_{n,k}\}_{k=1}^n$, which is the focus of the following discussion.

\subsection{Computing the $c_{n,k}$}
We assume that $y_1, \ldots, y_n$ have been chosen and are known, and now seek to define the coefficients $\{c_{n,k}\}_{k=1}^n$, equivalently the vector $\bs{c}_n$, whose computation allow evaluation of $y \mapsto w_n(y)$. To proceed we define a new matrix $\bs{U} \in \R^{\calN \times n}$, having entries
\begin{align*}
  \bs{U}_{n,-} &= \left[ \bs{w}^\calN(y_1) \hskip 5pt \bs{w}^\calN(y_2) \hskip 5pt \cdots \hskip 5pt \bs{w}^\calN(y_n) \right], & \left(U_{n,-}\right)_{j,k} &= w^\calN_j(y_k),
\end{align*}
where the vector $\bs{w}^\calN$ and its entries $w^\calN_j$ are expansion coefficients for the truth approximation solution, see \eqref{eq:truth-expansion}.

Then for each $y$, the coefficients $c_{n,k}$ of the RBM solution are defined by seeking the vector $\bs{c}_n(y) \in \R^n$ satisfying
\begin{align}\label{eq:rbm-condition}
  \bs{U}^T \left( \bs{S} + e^{-y} \bs{M} \right) \bs{U} \bs{c}_n(y) = \bs{U}^T \bs{f}.
\end{align}
Assuming $\bs{U}$ has linearly independent columns (which is assured by the choice of $y_k$ discussed in the next section), then this uniquely defines $\bs{c}_n(y)$ for each $y$, and prescribes the RBM solution $w_n$ via \eqref{eq:rbm-solution}. One final point of interest is that our truth variational form \eqref{eq:truth-w-} exhibits \textit{affine} dependence on the parameter $y$, making it possible to compute $\bs{c}_n$ very efficiently. We may rearrange computations in \eqref{eq:rbm-condition} so that 
\begin{align*}
  \left( \bs{B} + e^{-y} \bs{C} \right) \bs{c}(y) = \bs{g}, 
\end{align*}
where
\begin{align*}
  \bs{B} &\coloneqq \bs{U}^T \bs{S} \bs{U} \in \R^{n \times n}, & \bs{C} &\coloneqq \bs{U}^T \bs{M} \bs{U} \in \R^{n \times n}, & \bs{g} &\coloneqq \bs{U}^T \bs{f} \in \R^n,
\end{align*}
so that the quantities $\bs{B}$, $\bs{C}$, and $\bs{g}$, once computed, are all \textit{independent} of \textit{both} the truth discretization dimension $\calN$ and the parameter $y$. 
Thus, for each $y$, the coefficients $\bs{c}_n$ (i.e., the RBM solution $w_n$) can be computed with complexity that depends \textit{only} on $n$ and \textit{not} on $\calN$. 
Since in practice $n \ll \calN$ this can result in computational savings, especially if we wish to query $y \mapsto w^n_-(y)$ numerous times. 
This is one of the major attractions of model reduction with RBM. 

\subsection{Choosing $y_k$}
The ingredient we are left to provide to complete our description of the RBM algorithm is the choice of parameter values $y_k$ in \eqref{eq:rbm-solution}. Given an RBM approximation $w_n$, we focus on the choice of $y_{n+1}$. We accomplish this via the standard greedy procedure in RBM algorithms. Ideally, this choice is given by 
\begin{align}\label{eq:rbm-greedy}
  y_{n+1} = \argmax_{y \geq 0} \left\| w^\calN(y) - w_n(y) \right\|.
\end{align}
Unfortunately, this explicit form requires computing the full solution $w^\calN(y)$ at all parameter values $y$, which RBM seeks to avoid. To circumvent this restriction, the standard strategy is to resort to residual-based error indicators. The following lemma identifies one such computable residual-based error indicator $\Delta_n(y)$. 
\begin{lemma}
  Define the residual vector $\bs{r}_n(y) \in \R^{\calN}$ as 
  \begin{align}\label{eq:resdef}
    \begin{split}
    \bs{r}_{n,-}(y) &\coloneqq \bs{f} - \left( \bs{S} + e^{-y} \bs{M} \right) \bs{U}_{n,-} \bs{c}_{n,-}(y), \\
    \bs{r}_{n,+}(y) &\coloneqq \bs{f} - \left( \bs{S} e^{-y} + \bs{M} \right) \bs{U}_{n,+} \bs{c}_{n,-}(y),
    \end{split}
  \end{align}
  and the indicator 
  \begin{align}\label{eq:Delta-def}
    \begin{split}
    \Delta_{n,-}\left(y\right) &\coloneqq \frac{C_\calN^2}{\sqrt{\lambda_{\mathrm{min}}\left(\bs{M}\right)}\left( C_\calN^2 e^{-y} + 1 \right)} \|\bs{r}_n(y)\|, \\
    \Delta_{n,+}\left(y\right) &\coloneqq \frac{C_\calN^2}{\sqrt{\lambda_{\mathrm{min}}\left(\bs{M}\right)}\left( e^{-y} + C_\calN^2 \right)} \|\bs{r}_n(y)\|, 
    \end{split}
  \end{align}
  Then
  \begin{align}\label{eq:rbm-error-estimator}
    \Delta_{n,\pm}(y) \geq \left\| w^\calN_{\pm}(y) - w_{n,\pm}(y) \right\|
  \end{align}
\end{lemma}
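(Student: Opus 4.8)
The plan is to establish the error estimate by the standard residual-based argument for coercive linear parameterized problems, carefully tracking the relevant coercivity constant so that it appears as the denominator in \eqref{eq:Delta-def}. I will present the argument for the ``$-$'' case; the ``$+$'' case is entirely analogous after swapping the roles of the coefficients multiplying $\bs{S}$ and $\bs{M}$.

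First I would observe that by definition of the truth solution and the residual,
\begin{align*}
  \left( \bs{S} + e^{-y}\bs{M} \right) \bs{w}^\calN_-(y) = \bs{f}, \qquad
  \left( \bs{S} + e^{-y}\bs{M} \right) \bs{U}_{n,-}\bs{c}_{n,-}(y) = \bs{f} - \bs{r}_{n,-}(y),
\end{align*}
so that the error vector $\bs{e}_n(y) \coloneqq \bs{w}^\calN_-(y) - \bs{U}_{n,-}\bs{c}_{n,-}(y)$ satisfies the ``error-residual'' identity $\left( \bs{S} + e^{-y}\bs{M} \right)\bs{e}_n(y) = \bs{r}_{n,-}(y)$. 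Since $\bs{w}_{n,-}(y)$ is the finite element function whose coefficient vector is $\bs{U}_{n,-}\bs{c}_{n,-}(y)$, the quantity we must bound is $\left\| w^\calN_-(y) - w_{n,-}(y) \right\|_{L^2} = \| \bs{e}_n(y) \|_{\bs{M}}$, using the relation between $L^2$ norms of finite element functions and $\bs{M}$-weighted Euclidean norms of their coefficient vectors, exactly as in \eqref{eq:lemma-dc-norms}.

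Next I would bound $\|\bs{e}_n\|_{\bs{M}}$ from above. Testing the error-residual identity against $\bs{e}_n$ gives
\begin{align*}
  \bs{e}_n^T\left(\bs{S} + e^{-y}\bs{M}\right)\bs{e}_n = \bs{e}_n^T \bs{r}_{n,-}(y) \leq \|\bs{e}_n\|\,\|\bs{r}_{n,-}(y)\|,
\end{align*}
while from below, using $\lambda_{\min}(\bs{S},\bs{M}) = 1/C_\calN^2$ one has $\bs{e}_n^T\bs{S}\bs{e}_n \geq \frac{1}{C_\calN^2}\bs{e}_n^T\bs{M}\bs{e}_n$, hence $\bs{e}_n^T(\bs{S}+e^{-y}\bs{M})\bs{e}_n \geq \left(\frac{1}{C_\calN^2} + e^{-y}\right)\|\bs{e}_n\|_{\bs{M}}^2$. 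Combining, and then converting the bare Euclidean norm $\|\bs{e}_n\|$ on the right to the $\bs{M}$-norm via $\|\bs{e}_n\| \leq \|\bs{e}_n\|_{\bs{M}}/\sqrt{\lambda_{\min}(\bs{M})}$, I can cancel one factor of $\|\bs{e}_n\|_{\bs{M}}$ to obtain
\begin{align*}
  \|\bs{e}_n\|_{\bs{M}} \leq \frac{1}{\sqrt{\lambda_{\min}(\bs{M})}\left(\frac{1}{C_\calN^2} + e^{-y}\right)}\|\bs{r}_{n,-}(y)\|
  = \frac{C_\calN^2}{\sqrt{\lambda_{\min}(\bs{M})}\left(1 + C_\calN^2 e^{-y}\right)}\|\bs{r}_{n,-}(y)\|,
\end{align*}
which is exactly $\Delta_{n,-}(y)$. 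The ``$+$'' case proceeds identically: the operator is $e^{-y}\bs{S} + \bs{M}$, so the lower bound becomes $\left(\frac{e^{-y}}{C_\calN^2} + 1\right)\|\bs{e}_n\|_{\bs{M}}^2$, giving the denominator $\sqrt{\lambda_{\min}(\bs{M})}\left(e^{-y} + C_\calN^2\right)$ after multiplying numerator and denominator by $C_\calN^2$.

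I do not anticipate a genuine obstacle here; the argument is the textbook coercivity-plus-residual estimate. The only points requiring care are bookkeeping: (i) making sure the coercivity constant of $\bs{S} + e^{-y}\bs{M}$ relative to the $\bs{M}$-inner product is $\frac{1}{C_\calN^2} + e^{-y}$ (and not, say, $1 + e^{-y}$), which is where the definition \eqref{eq:CN-def} of $C_\calN$ enters; and (ii) the two norm conversions — between $L^2(\Omega)$ and $\|\cdot\|_{\bs{M}}$ for the error, and between $\|\cdot\|$ and $\|\cdot\|_{\bs{M}}$ via $\lambda_{\min}(\bs{M})$ for the residual term — so that the final constant matches \eqref{eq:Delta-def} exactly. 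One could alternatively present the whole thing directly in the $\bs{A} = \bs{M}^{-1/2}\bs{S}\bs{M}^{-1/2}$ variables as in the proof of Proposition \ref{prop:qerror}, which makes the coercivity bound transparent, but the computation above is already short enough that I would just carry it out directly.
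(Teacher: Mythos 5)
Your proof is correct and follows essentially the same route as the paper: both hinge on the error--residual identity $(\bs{S}+e^{-y}\bs{M})\bs{e}_n = \bs{r}_{n,-}$, the key computation $\lambda_{\mathrm{min}}(\bs{S}+e^{-y}\bs{M},\bs{M}) = e^{-y} + 1/C_\calN^2$, and the conversion to the plain Euclidean norm via $\lambda_{\mathrm{min}}(\bs{M})$. The only (cosmetic) difference is that you phrase the coercivity bound by testing the residual equation against $\bs{e}_n$, whereas the paper states it directly as an operator-norm bound on $\bs{M}^{1/2}(\bs{S}+e^{-y}\bs{M})^{-1}\bs{M}^{1/2}$; the resulting constants are identical.
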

\begin{proof}
  We show the result for the ``$-$" quantities; a similar proof works for the ``$+$" quantities. The residual $\bs{r}_{n,-}$ satisfies
  \begin{align*}
    \left( \bs{S} + e^{-y} \bs{M} \right) \left( \bs{w}^\calN_-(y) - \bs{w}_{n,-}(y) \right) = \bs{r}_{n,-},
  \end{align*}
  so that 
  \begin{align*}
    \left\| w^\calN_-(y) - w_{n,-}(y) \right\|_{L^2} = \left\| \bs{w}_-^\calN(y) - \bs{w}_{n,-}(y) \right\|_{\bs{M}} \leq \frac{1}{\lambda_{\mathrm{min}}\left(\bs{S} + e^{-y} \bs{M}, \bs{M} \right)} \left\| \bs{M}^{-1/2} \bs{r}_{n,-} \right\|,
  \end{align*}
  with
  \begin{align*}
    \lambda_{\mathrm{min}}\left(\bs{S} + e^{-y} \bs{M}, \bs{M} \right) \coloneqq \inf_{\bs{v} \in \R^{\calN}} \frac{ \bs{v}^T \left( \bs{S} + e^{-y} \bs{M} \right) \bs{v}}{\bs{v} \bs{M} \bs{v}} = e^{-y} + \inf_{\bs{v} \in \R^{\calN}} \frac{\bs{v}^T \bs{S} \bs{v}}{\bs{v}^T \bs{M} \bs{v}} = e^{-y} + \lambda_{\mathrm{min}}\left(\bs{S}, \bs{M}\right).
  \end{align*}
  To summarize, we have the estimate
  \begin{align*}
    \left\| w_-^\calN(y) - w_{n,-}(y) \right\| &\leq \frac{1}{e^{-y} + \lambda_{\mathrm{min}}}\left(\bs{S}, \bs{M}\right) \left\| \bs{M}^{-1/2} \bs{r}_{n,-} \right\| \\
                                                 &\leq \frac{1}{\sqrt{\lambda_{\mathrm{min}}\left(\bs{M}\right)}\left( e^{-y} + \lambda_{\mathrm{min}}\left(\bs{S}, \bs{M}\right)\right)} \|\bs{r}_{n,-}\|,
  \end{align*}
  which is the desired result by using the definition of $C_\calN$ in \eqref{eq:CN-def}.
\end{proof}
The residual vectors $\bs{r}_{n,\pm}$ can be efficiently computed for many values of $y$. We illustrate this $\bs{r}_{n,-}$. We have:
\begin{align}\nonumber
  \bs{r}_{n,-}(y) &\coloneqq \bs{f} - \left( \bs{S} + e^{-y} \bs{M} \right) \bs{U}_{n,-} \bs{c}_{n,-}(y) \\\label{eq:resminus}
                  &= P_{\mathcal{R}(\bs{R}_n)^\perp} \bs{f} + P_{\mathcal{R}(\bs{R}_n)} \left( \bs{f} - \left( \bs{S} + e^{-y} \bs{M} \right) \bs{U}_{n,-} \bs{c}_{n,-}(y) \right),
\end{align}
where $P_{\mathcal{R}(\bs{A})}: \R^\calN \rightarrow \R^\calN$ is the $\R^{\calN}$-orthogonal projector onto the column space of a matrix $\bs{A}$, and 
\begin{align*}
  \bs{R}_n \coloneqq \left[ \bs{S}\bs{U}_{n,-}, \hskip 3pt \bs{M}\bs{U}_{n,-} \right] \in \R^{\calN \times (2 n)}.
\end{align*}
The orthogonal decomposition in \eqref{eq:resminus} shows that the Pythagorean theorem can be used to compute the Euclidean vector norm $\|\bs{r}_{n,-}(y)\|_2$ in an efficient way for several values of $y$: 
\begin{itemize}
  \item $\|P_{\mathcal{R}(\bs{R}_n)^\perp} \bs{f}\|_2$ is $y$-independent, so that it can be computed once and stored.
  \item $P_{\mathcal{R}(\bs{R}_n)} \left( \bs{f} - \left( \bs{S} + e^{-y} \bs{M} \right) \bs{U}_{n,-} \bs{c}_{n,-}(y) \right)$ is a vector in a $2n$-dimensional, $y$-independent vector space. Thus, the norm can be computed with only $n$-dependent complexity. The fact that $\bs{c}_n(y)$ appears with linear behavior in this expression ensures that we can rearrange computations so that, for each $\bs{c}_n(y)$, the norm of this quantity can be computed using complexity that is dependent only on $n$.
\end{itemize}
In summary, while the right-hand side of \eqref{eq:rbm-error-estimator} is not efficiently computable for many values of $y$, the left-hand side is efficiently computable for several values of $y$ since $\|\bs{r}_n\|_2$ is efficient to compute, and $\lambda_{\mathrm{min}}\left(\bs{S}, \bs{M}\right)$ does not depend on $y$ and can be computed either directly or iteratively with generalized eigenvalue solvers once and subsequently stored.

Standard greedy algorithms for RBM methods require a computable quantity satisfying \eqref{eq:rbm-error-estimator}, and replace the essentially un-computable maximization \eqref{eq:rbm-greedy} with the computable maximization
\begin{align}\label{eq:rbm-weak-greedy}
  y_{n+1} = \argmax_{y \geq 0} \Delta_n(y).
\end{align}
The above maximization has an objective function that is efficiently computable, and the inequality \eqref{eq:rbm-error-estimator} ensures that the maximization \eqref{eq:rbm-weak-greedy} is a \textit{weak} greedy algorithm. Weak greedy algorithms in turn ensure that the set of chosen parameters $\{y_1, \ldots, y_N\}$ defines an RBM subspace $V_N$ in \eqref{eq:rbm-def} whose best approximation to the truth solution $\bs{w}^\calN_-$ is comparable to the Kolmogorov $N$-width \cite{binev_convergence_2011,devore_greedy_2013}. 

We have completed the basic description of the RBM algorithm: the emulators $w_{N,\pm}(y)$ are defined by computing coefficients $\bs{c}_{N,\pm}(y)$ as described in the previous section, and the parameter values $y_k$ are chosen according to \eqref{eq:rbm-weak-greedy} by computing the estimators $\Delta_{n,\pm}(y)$ for $n = 1, 2, \ldots, $.  One usually sequentially computes $y_k$ until $\sup_{y \geq 0} \Delta_{n,\pm}(y)$ is smaller than some specified tolerance, so that one can rigorously certify the error committed by the RBM emulators. This tolerance condition is usually how the terminal RBM dimension $N$ is computationally specified.

One final observation we make is a major theoretical result of this paper: 
\begin{theorem}\label{thm:u-certificate}
  Let $N_{\pm}$ denote the RBM dimensions formed for the emulators $w_{N_{\pm},\pm}(y)$. Then 
  \begin{align}\label{eq:rbm-u-certificate}
    \sup_{s \in (0,1)} \left\| u^\calN(s) - u_{N}(s) \right\| \leq \frac{4}{\pi} \max \left\{ \sup_{y \geq 0} \Delta_{N_+,+}(y), \sup_{y \geq 0} \Delta_{N_-,-}(y) \right\}.
  \end{align}
\end{theorem}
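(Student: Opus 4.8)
The plan is to run the same argument that establishes the $s$-uniform stability bound in Proposition \ref{prop:u-stability}, but applied to the \emph{difference} $u^\calN(s) - u_N(s)$ instead of to $u^\calN(s)$ alone, and with the a priori bounds \eqref{eq:wl2} on the truth solves replaced by the a posteriori RBM error estimate \eqref{eq:rbm-error-estimator}. First I would write $u_N(s)$ explicitly: by construction it is the fully discrete quadrature solution \eqref{eq:gq-truth-solution} with each truth solve $\bs{w}^\calN_\sigma$ replaced by its emulator $\bs{w}_{N_\sigma,\sigma}$, so that
\begin{align*}
  u_N(s) = \sum_{\sigma \in \{+,-\}} \beta_0(s_\sigma) \sum_{j=1}^{M_\sigma} \tau_{j,\sigma}\, w_{N_\sigma,\sigma}\!\left(\frac{y_{j,\sigma}}{s_\sigma}\right).
\end{align*}
Because $u^\calN(s)$ and $u_N(s)$ are built from the \emph{same} Gauss--Laguerre rules $(y_{j,\sigma},\tau_{j,\sigma})$ and the \emph{same} scalars $\beta_0(s_\sigma)$ and $1/s_\sigma$, subtracting gives the term-by-term identity
\begin{align*}
  u^\calN(s) - u_N(s) = \sum_{\sigma \in \{+,-\}} \beta_0(s_\sigma) \sum_{j=1}^{M_\sigma} \tau_{j,\sigma}\left( w^\calN_\sigma\!\left(\tfrac{y_{j,\sigma}}{s_\sigma}\right) - w_{N_\sigma,\sigma}\!\left(\tfrac{y_{j,\sigma}}{s_\sigma}\right)\right),
\end{align*}
with no cross terms.

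Next I would take the $L^2$ norm and apply the triangle inequality, then bound each summand. Every quadrature abscissa satisfies $y_{j,\sigma}/s_\sigma \geq 0$, so the estimate \eqref{eq:rbm-error-estimator} applies pointwise: $\| w^\calN_\sigma(y_{j,\sigma}/s_\sigma) - w_{N_\sigma,\sigma}(y_{j,\sigma}/s_\sigma)\| \leq \Delta_{N_\sigma,\sigma}(y_{j,\sigma}/s_\sigma) \leq \sup_{y\geq 0}\Delta_{N_\sigma,\sigma}(y)$. Since the weights $\tau_{j,\sigma}$ are nonnegative and sum to $1$ (as $W$ is a probability density), the inner sums collapse, yielding
\begin{align*}
  \| u^\calN(s) - u_N(s) \| \leq \sum_{\sigma \in \{+,-\}} \beta_0(s_\sigma)\, \sup_{y \geq 0} \Delta_{N_\sigma,\sigma}(y) \leq \Big(\beta_0(s_-)+\beta_0(s_+)\Big)\max\Big\{\sup_{y\geq 0}\Delta_{N_+,+}(y),\ \sup_{y\geq 0}\Delta_{N_-,-}(y)\Big\}.
\end{align*}
Finally, exactly as at the end of the proof of Proposition \ref{prop:u-stable}, $\beta_0(s_-)+\beta_0(s_+) = \sin(\pi s)/(\pi s(1-s)) \leq 4/\pi$ for all $s \in (0,1)$; taking the supremum over $s$ gives \eqref{eq:rbm-u-certificate}.

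I do not expect a substantive obstacle: the argument is essentially bookkeeping. The one point worth stating carefully is the term-by-term subtraction above, which is legitimate precisely because the two discrete solutions share all quadrature data and all $s$-dependent prefactors, so no $s$-dependent blow-up can enter. A secondary subtlety is that the right-hand side makes sense only if each $\sup_{y \geq 0}\Delta_{N_\sigma,\sigma}(y)$ is finite; this is guaranteed by the greedy termination criterion discussed just before the theorem, under which the RBM dimension $N_\sigma$ is chosen so that this supremum falls below a prescribed tolerance, so the bound is a genuine, computable a posteriori certificate.
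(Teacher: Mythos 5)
Your proof is correct and follows essentially the same route as the paper's: term-by-term subtraction of the two quadrature sums, the triangle inequality combined with the a posteriori estimate \eqref{eq:rbm-error-estimator}, the fact that the Gaussian weights are nonnegative and sum to one, and the bound $\beta_0(s_-)+\beta_0(s_+) \leq 4/\pi$. You have merely written out the bookkeeping that the paper leaves implicit.
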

\begin{proof}
  We have
  \begin{align*}
    u^\calN(s) - u_N(s) &= \beta_0(s_-) \sum_{j=1}^{N_-} \tau_{j,-} \left[ w^\calN_-\left(\frac{y_{j,-}}{s_-}\right) - w_{N_-,-} \left(\frac{y_{j,-}}{s_-}\right) \right] + \\ 
     & \beta_0(s_+) \sum_{j=1}^{N_+} \tau_{j,+} \left[ w^\calN_+\left(\frac{y_{j,+}}{s_+}\right) - w_{N_+,+} \left(\frac{y_{j,+}}{s_+}\right) \right] .
  \end{align*}
  Taking $L^2$ norms of both sides, and using the triangle inequality with \eqref{eq:rbm-error-estimator} yields the result.
\end{proof}
We note that the quantities $\Delta_{N_\pm,\pm}(y)$ are computed during the RBM construction phase, so that these estimators are available. Therefore, \eqref{eq:rbm-u-certificate} provides a computable error bound that can be used to certify error committed by using the RBM algorithm.

\begin{remark}\label{rem:general-b}
  Just as with Remark \ref{rem:general-b}, all our results above extend to a more general elliptic operator $\mathcal{E}$ satisfying the assumptions outlined in Remark \ref{rem:general-b}. In this case, all of our formulas in this section carry over.
\end{remark}

\subsection{Algorithm summary}\label{ssec:rbm:algorithm}
The full algorithm of this section first uses the RBM algorithm to perform model reduction on the parameterized PDE solutions $w^\calN_{\pm}(y)$.\footnote{In the previous sections we have describe this process only for $w^\calN_-$, but the process for $w^\calN_+$ results in almost the same procedure with only minor differences stemming from the location of the $e^{-y}$ factor.}
Subsequently, the efficient RBM emulators $w_{N,\pm}(y)$ are used in the GQ algorithm from Section \ref{ssec:gq:algorithm}. We describe the full algorithm in Algorithm \ref{alg:RBM}. 

\begin{algorithm}
  \caption{RBM algorithm: Produces solution to the fractional Laplace problem \eqref{eq:fpde}. The function \textsc{OfflineFracLapRBM} needs to be completed only once and has complexity dependent on $\mathcal{N} = \dim V$. Afterwards, \textsc{OnlineFracLapRBM} can be called for arbitrary values of $s \in (0, 1)$ with a computational cost dependent only on $\dim V_N = N \ll \mathcal{N}$.
    \label{alg:RBM}}
  \begin{algorithmic}[1]
    \Require{Availability of a discrete solution $\bs{w}^{\calN}(\alpha,\beta)$ from the formulation \eqref{eq:w-galerkin}.}
    \Function{OfflineFracLapRBM}{}
      \State $n \gets 1$. Randomly choose $y_1$, compute and store $w^\calN_-(y_1)$. 
      \State Assemble RBM emulator $w_1(\cdot)$.
      \For{$n \gets 2 \textrm{ to } N_-$}
        \State Compute $y_{n}$ from \eqref{eq:rbm-weak-greedy}.
        \State Compute and store $w^\calN_-(y_n)$.
        \State Assemble RBM emulator $w_n(\cdot)$.
      \EndFor
      \State Repeat above computations to assemble $w_{N_+,+}(\cdot)$
      \State \Return{RBM emulators $w_{N_{\pm},\pm}(\cdot)$}
    \EndFunction
    \Statex
    \Require{Availability of RBM emulators $w_{N_{\pm},\pm}(\cdot)$.}
    \Function{OnlineFracLapRBM}{$s$}
    \State Call \textsc{FracLapGQ}$(s)$, Algorithm \ref{alg:GQ}, replacing $w^\calN_{\pm}(\cdot)$ with RBM emulators $w_{N_{\pm},\pm}(\cdot)$.
    \State \Return{$\bs{u}_N(s)$.}
    \EndFunction
  \end{algorithmic}
\end{algorithm}
The algorithm we have described in this section is a skeleton version of a modern RBM algorithm. We summarize various improvements that should be implemented in order for an RBM algorithm to be efficient and accurate:
\begin{itemize}
  \item One does not usually solve \eqref{eq:rbm-weak-greedy} by maximizing over the parameter continuum, and instead maximizes over a discrete set. For bounded parameter domains, it is common to use a uniform grid and subsequently adaptively (e.g., dyadically) refine the grid to ensure that no local maxima are skipped. Over the unbounded domain, we employ a logarithmic map; see section \ref{ssec:results-rbm-offline} for details.
  \item Our RBM ansatz \eqref{eq:rbm-solution} uses solution \textit{snapshots} $w^\calN_{\pm}(y)$ as basis functions. This is known to generally lead to ill-conditioning in the formulation \eqref{eq:rbm-condition} even for small $n$ (since in practice the columns of $\bs{U}_{n,\pm}$ are ``nearly" linearly dependent). A better prescription is to build the RBM basis functions by orthogonalizing snapshots.
  \item Some \naive{} implementations of the decomposition \eqref{eq:resminus} via quadratic forms leads to numerical roundoff error that results in stagnation of the error indicators $\Delta_{n,\pm}(y)$ near root-machine precision. (E.g., for double precision, stagnation occurs when $\Delta_{n,\pm}(y)$ takes values around $10^{-8}$. More careful computations allow one to overcome this limitation \cite{casenave_accurate_2012,casenave_accurate_2014,buhr_numerically_2014,chen_robust_2019}. 
\end{itemize}
We again refer to \cite{patera_reduced_2007,hesthaven_certified_2016,quarteroni_reduced_2016} for a more complete description of important but standard RBM algorithm details.

Finally, we note that one can combine the error estimates in \eqref{eq:rbm-u-certificate} and \eqref{eq:quad-error} via the triangle inequality to create a computable bound for $\| u^N - \widetilde{u}^\calN \|$. This error would estimate the error committed by the two novel innovations of this paper: our Gaussian quadrature approach and the model reduction procedure.

\section{Numerical examples}\label{sec:results}

In these experiments, we compare the effectiveness of our improvements to current methods. We first describe the setup of the test problems that we consider in our simulations. We solve \eqref{eq:fpde} on $\Omega = [0,1]^2$ with homogeneous Dirichlet boundary conditions. We test a total of 3 algorithms:
\begin{itemize}
  \item ``SQ" --- The sinc quadrature approach from \cite{bonito_numerical_2015}. 
  \item ``GQ" --- Algorithm \ref{alg:GQ} detailed in section \ref{sec:gq}, utilizing a modified version of the integral formulation in \cite{bonito_numerical_2015} along with Gaussian quadrature.
  \item ``RBM" --- The approach detailed in section \ref{ssec:rbm:algorithm}, leveraging the reduced basis method to accelerate the GQ algorithm.
\end{itemize}

To compare the three methods, we will use the same number of $y$-quadrature points $M = M_- + M_+$ in each approach. 
This number represents the total number of classical PDE solves needed to compute an approximation to $u(s)$.
In particular, we make the choices given in~\cite{bonito_numerical_2015,antil_short_2017}, which depend on the spatial mesh and fractional order:
\begin{align}\label{eq:N-quad}
  M_+ &= \bigg\lceil \frac{\pi^2}{4sk^2} \bigg\rceil, & M_- &= \bigg\lceil \frac{\pi^2}{4(1-s)k^2} \bigg\rceil, & k&=\frac{1}{\log(\sqrt{\calN})}
\end{align}
The finite element discretization is accomplished with linear quadrilateral finite elements on a Cartesian tessellation of $\Omega$. The one-dimensional grids that define this Cartesian tessellation are isotropic with respect to the two dimensions, and are defined as equidistant meshes with $2^{K}$ points. We will use various values of $K$. 

\subsection{Manufactured Solutions on $[0,1]^2$}
Consider the physical domain $\Omega = [0, 1]^2$. In this case, an explicit family of eigenfunctions for the Laplacian with homogeneous Dirichlet boundary conditions is available:
\begin{align*}
  \phi_{n,m}(x) &= \sin(n \pi x_1) \sin(m \pi x_2), & n, m &\in \N, & x &= (x_1, x_2) \in \Omega, 
\end{align*}
which satisfy
\begin{align*}
  -\Delta \phi_{n,m}(x) &= \lambda_{n,m} \phi_{n,m}(x), & \lambda_{n,m} = \pi^2 (n^2 + m^2).
\end{align*}
With this in hand, and using the inverse of the relation \eqref{eq:fraclap-explicit}, we can easily construct explicit solutions for testing using eigenfunction expansions. We explore the effectiveness of our algorithms through three manufactured solutions:
\begin{itemize}
  \item ``\textsc{Sine}" --- The function $u$ and data $f$ are, in this case,
    \begin{align*}
      u(x) &= \frac{1}{(2 \pi^2)^s} \sin(\pi x_1) \sin(\pi x_2), & f(x) &= \sin( \pi x_1) \sin(\pi x_2) 
    \end{align*}
  \item ``\textsc{Mixed modes}" --- The function $u$ and data $f$ are, in this case,
    \begin{align*}
      u(x) &= \frac{1}{(116 \pi^2)^s} \sin(4 \pi x_1) \sin(10 \pi x_2), & f(x) &= \sin( 4 \pi x_1) \sin( 10\pi x_2) 
    \end{align*}
  \item ``\textsc{Square bump}" --- The data $f$ is the indicator function
    \begin{align*}
      f(x) = \mathbbm{1}_{[0.25, 0.75]^2}(x).
    \end{align*}
    While an analytical solution is available as an infinite sum of eigenfunctions, we instead numerically compute a solution to the above problem on a highly refined mesh and consider this the ``exact" solution.
\end{itemize}

\subsection{Spatial convergence}
Our first test verifies that we recover spatial convergence in terms of the finite element mesh size. Since are primarily interested in accuracy and not efficiency, this section compares the \textsc{SQ} and \textsc{GQ} methods. 

We can see that the proposed \textsc{GQ} algorithm performs slightly better than the existing \textsc{SQ} algorithm for the same number of quadrature points $M$. We also remark that the \textsc{GQ} implementation allows us to generate solutions for small fractional parameters with less numerical difficulty. With the \textsc{SQ} approach, the difficulty arises when application of the quadrature rule results in large values of a term involving $e^{y}$ appearing in operators that must be inverted.

\begin{figure}[htbp]
\begin{center}
  \resizebox{\textwidth}{!}{
 \includegraphics[width=0.32\textwidth]{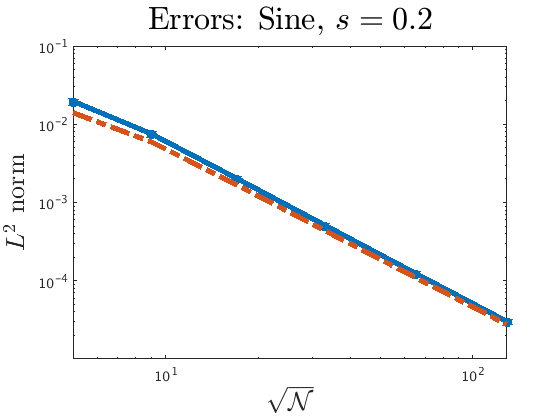} 
 \includegraphics[width=0.32\textwidth]{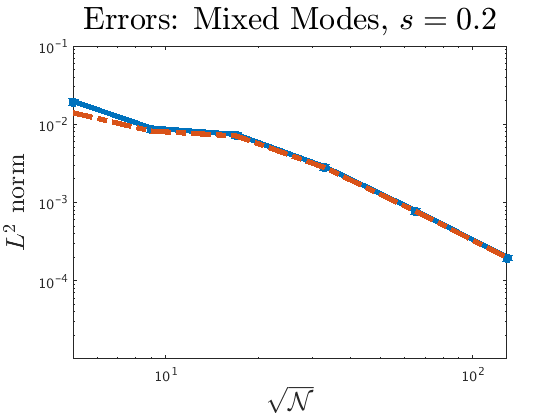}
 \includegraphics[width=0.32\textwidth]{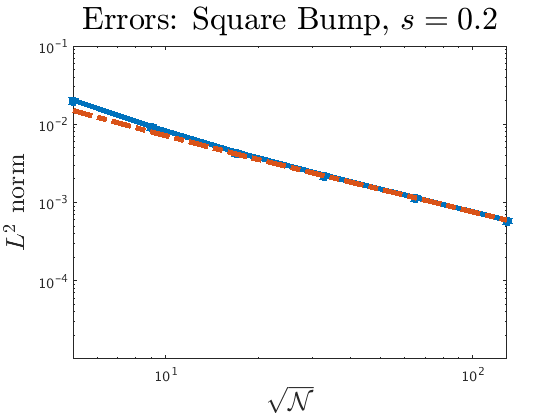}
 }
  \resizebox{\textwidth}{!}{
 \includegraphics[width=0.32\textwidth]{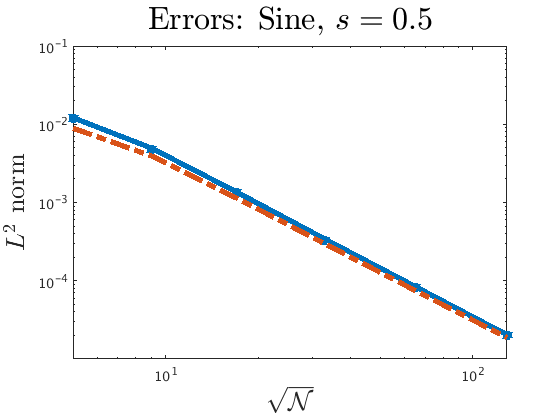} 
 \includegraphics[width=0.32\textwidth]{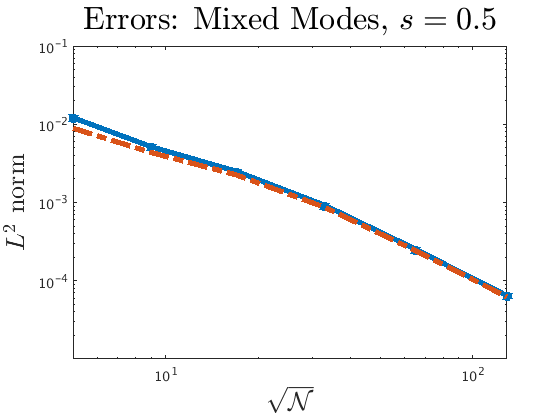}
 \includegraphics[width=0.32\textwidth]{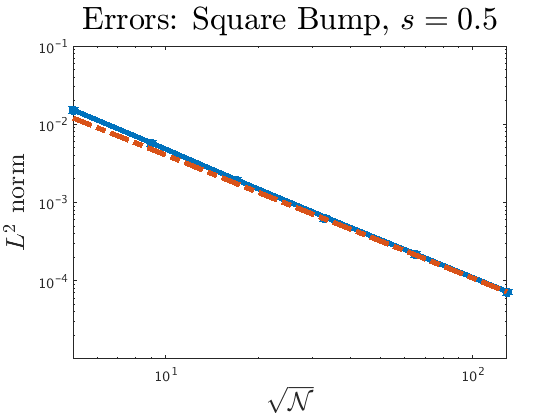}
 }
  \resizebox{\textwidth}{!}{
 \includegraphics[width=0.32\textwidth]{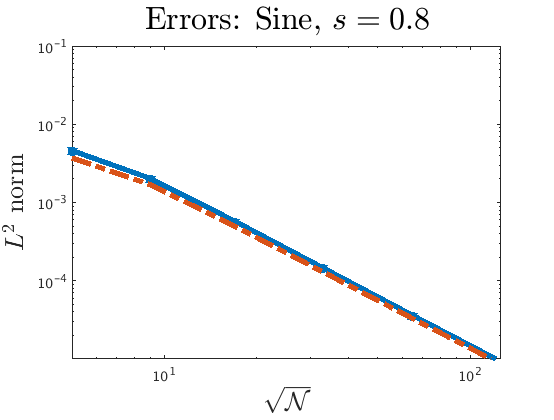} 
 \includegraphics[width=0.32\textwidth]{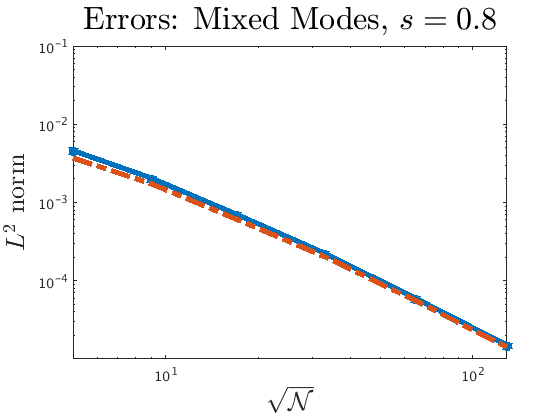}
 \includegraphics[width=0.32\textwidth]{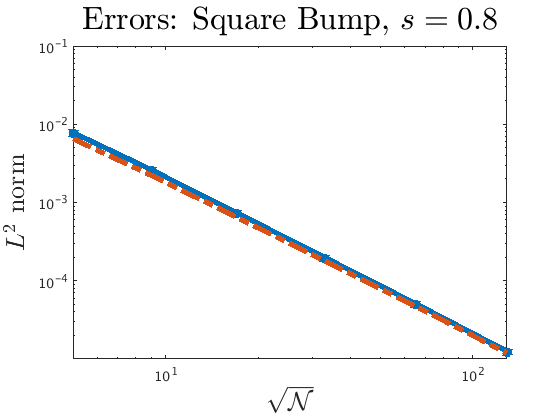}
 }
\end{center}
  \caption{Convergence of SQ (solid blue) and GQ methods (red dashed) as the spatial mesh is refined. 
  Each used a dyadic mesh along the spatial variable with increasing resolution. 
  A parameter value of $s=0.2$ was used and similar results were seen for value of $s$ between $0.1$ and $0.9$. We used the number of quadrature points for the integral suggested by current methods ~\cite{antil_short_2017}.
 \label{fig:h-convergence}}
\end{figure}

\subsection{Quadrature rule efficiency}
In this section we compute errors committed by the \textsc{GQ} and \textsc{SQ} algorithms for different values of the quadrature rule size $M$. The purpose of this test is to understand the efficiency of the quadrature rule, i.e., the number of solutions of $w^\calN_{\pm}$ required. Figure \ref{fig:quadplot} illustrates errors for the three test cases as a function of the total number of quadrature nodes. The test in this section does not fix $M_\pm$ as given by \eqref{eq:N-quad}. Instead, given a number of quadrature points $M$ (the abscissa in Figure \ref{fig:quadplot}), we generate $M$-point quadrature rules for the Gaussian quadrature and sinc approaches. Thus, the quadrature rule for each $M$ is generated anew.

The results indicate that the \textsc{GQ} algorithm converges faster than the \textsc{SQ} with respect to the number of PDE solves. This shows that the GQ algorithm appears to be far more efficient than sinc quadrature for computing solutions to these fractional problems.
\begin{figure}[htbp]
\begin{center}
  \resizebox{\textwidth}{!}{
 \includegraphics[width=0.32\textwidth]{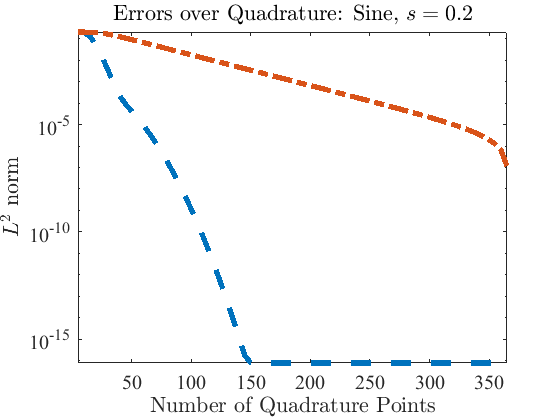}
 \includegraphics[width=0.32\textwidth]{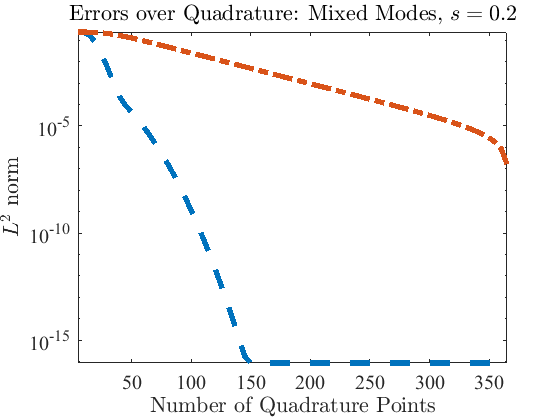}
 \includegraphics[width=0.32\textwidth]{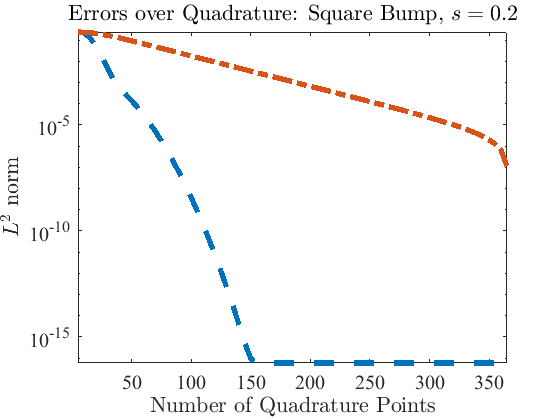} 
 }
  \resizebox{\textwidth}{!}{
 \includegraphics[width=0.32\textwidth]{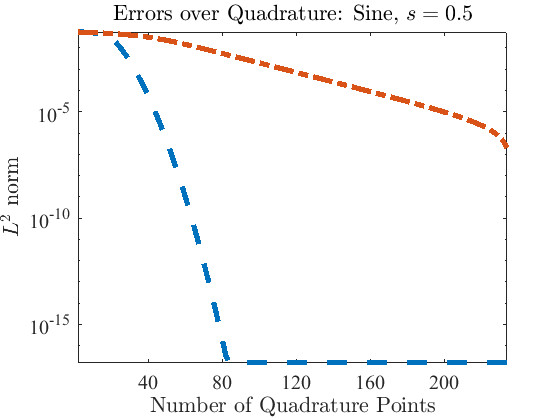}
 \includegraphics[width=0.32\textwidth]{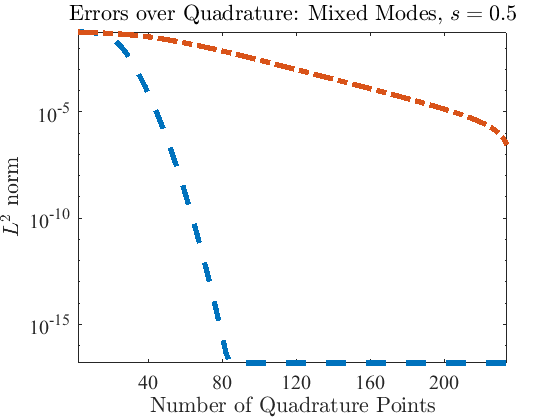}
 \includegraphics[width=0.32\textwidth]{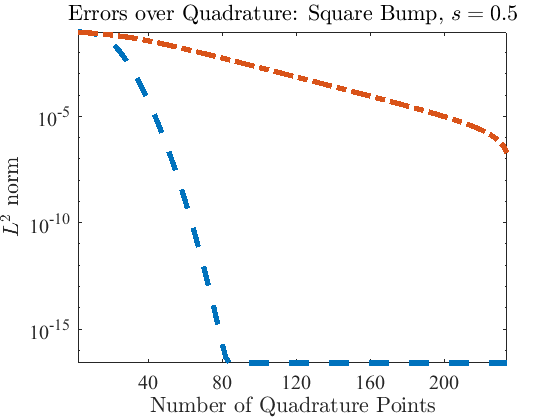} 
 }
  \caption{\label{fig:quadplot}Accuracy comparison of the SQ (red, dotted and dashed) and GQ methods (blue, dashed), with fractional order $s=0.2$ (top plots) and $s = 0.5$ (bottom plots). Similar results where observed for value of $s$ between $0.1$ and $0.9$.}
\end{center}
\end{figure}
\subsection{RBM offline efficiency}\label{ssec:results-rbm-offline}
This section investigates the \textsc{RBM} algorithm. For now, we restrict our attention to one-time queries of $u(s)$, i.e., to situations when, given $\Omega$, $f$, and $s$, we seek to compute \textit{only} $u(s)$ for this given $s$. For the \textsc{GQ} algorithm this involves a single run of the routine \textsc{FracLapGQ} in Algorithm \ref{alg:GQ}. For the \textsc{RBM} algorithm, this entails a single run of the \textsc{OfflineFracLapRBM} routine in Algorithm \ref{alg:RBM}, followed by a single run of \textsc{OnlineFracLapRB} routine. 

For the \textsc{RBM} algorithm, we solve \eqref{eq:rbm-weak-greedy} by discretizing the $y \in [0, \infty)$ domain in a uniform way under a logarithmic map. Precisely: we set $z = e^{-y}$ for $y \in [0, \infty)$ and proceed to discretize $z \in [0,1]$. We take 128 equispaced points in the $z$ variable and map back to $y$-space with $z \mapsto -\log z = y$. We subsequently perform a discrete maximization over this set instead of the continuous optimization \eqref{eq:rbm-weak-greedy}. 

In figure \ref{fig:offline-time} we compare the \textsc{GQ} algorithm to the accelerated \textsc{RBM} algorithm, including the offline construction time. We see that the initial investment of the RBM algorithm in the offline phase is substantial, accumulating to the time required for the direct \textsc{GQ} method with 150-200 quadrature points. However, we see that after this initial offline investment, subsequent evaluations of the RBM surrogate are extremely efficient, so that the effort required to evaluate $M \gg 1$ quadrature point is essentially the same as that required to evaluate at a single quadrature point.

\begin{figure}[H]
\begin{center}
  \resizebox{\textwidth}{!}{
 \includegraphics[width=0.32\textwidth]{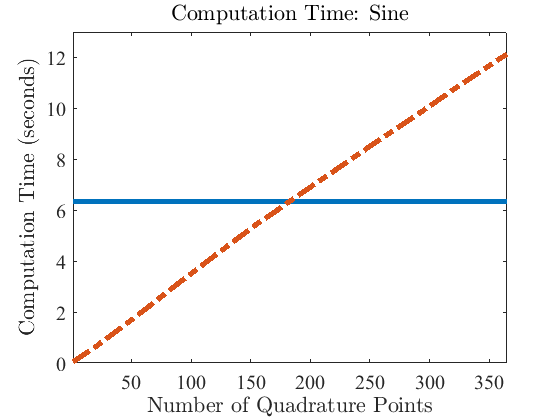}
 \includegraphics[width=0.32\textwidth]{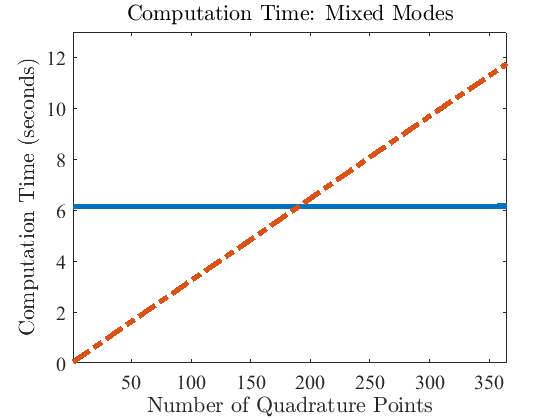}
 \includegraphics[width=0.32\textwidth]{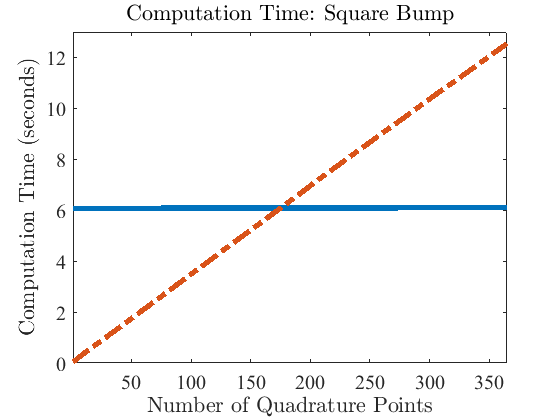} 
 }
 \caption{\label{fig:offline-time}``Offline" (i.e., one-time) computational investment for a single solve of \eqref{eq:fpde} with a fixed value of $s = 0.2$. These experiments compare both the direct (dotted and dashed) and reduced basis methods (solid) using the gaussian quadrature. Each used a dyadic mesh with $7$ levels. Similar results where seen for value of $s$ between $0.1$ and $0.9$. A dyadic spatial mesh and its corresponding number of quadrature points were used. 
}
\end{center}
\end{figure}

\subsection{RBM accuracy}
We now investigate the accuracy delivered by the RBM algorithm in the construction of reduced order models for $w^\calN_\pm$. Our rigorous error certificate for $u(s)$ using the reduced order model is \eqref{eq:rbm-u-certificate}, but we here consider a finer estimate using the proof of Theorem \ref{thm:u-certificate}. We define the error estimator,
\begin{align*}
  \Delta_N(s) \coloneqq \sum_{\sigma \in \{+,-\}} \beta_0(s_\sigma) \sum_{j=1}^{M_\sigma} \Delta_{N,\sigma}(s_{j,\sigma}) \tau_{j,\sigma},
\end{align*}
where $\Delta_{N,\pm}$ are the computable error indicators defined in \eqref{eq:Delta-def}, and we again choose $M_{\pm}$ as in \eqref{eq:N-quad}. One can see from the proof of Theorem \ref{thm:u-certificate} that this quantity bounds the error committed by the RBM procedure. We show the values of this quantity in Figure \ref{fig:Delta} as a function of $N$, and observe that it decays exponentially.

Finally, we remark that $\Delta_N$ only certifies the error committed by the model reduction RBM algorithm; the error committed by the $y$-quadrature rule is not certified by this quantity.

\begin{figure}[htbp]
\begin{center}
 \includegraphics[width=0.32\textwidth]{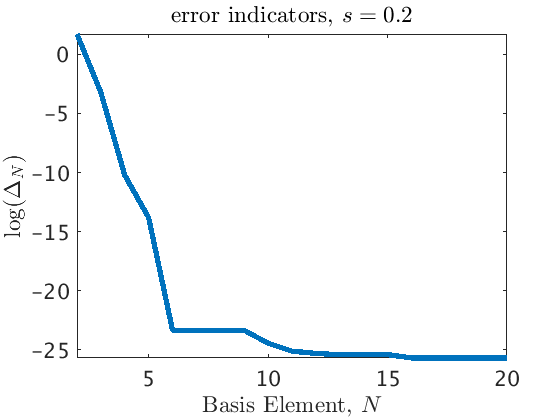} 
 \includegraphics[width=0.32\textwidth]{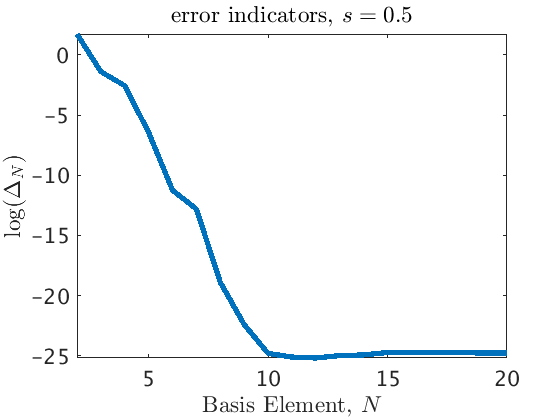}
\includegraphics[width=0.32\textwidth]{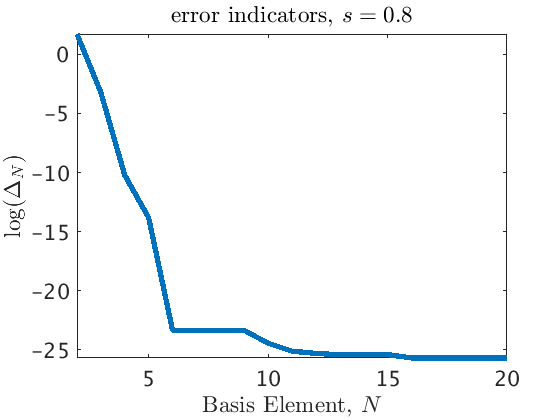}
\end{center}
\caption{\label{fig:Delta}Error indicators $\Delta_N(s)$ as a function of $N$, for $s = 0.2, 0.5, 0.8$.}
\end{figure}

\subsection{RBM accuracy and efficiency}
Finally, we explore the accuracy afforded by the \textsc{RBM} procedure for computing $s \mapsto u(s)$, and also verify the computational efficiency of the procedure. In Figure \ref{fig:timing} left and center, we demonstrate that the error committed by the RBM algorithm is stable, even for relatively small values of the parameter $0.01 < s < 0.1$. Furthermore, the right pane of this figure demonstrates that if we wish to \textit{repeatedly} query the map $s \mapsto u(s)$ for several values of $s$, the \textsc{RBM} algorithm is undeniably more efficient by an order of magnitude even for just one query, and by three orders of magnitude if 1000 queries are needed.

\begin{figure}[htbp]
\begin{center}
 \includegraphics[width=0.32\textwidth]{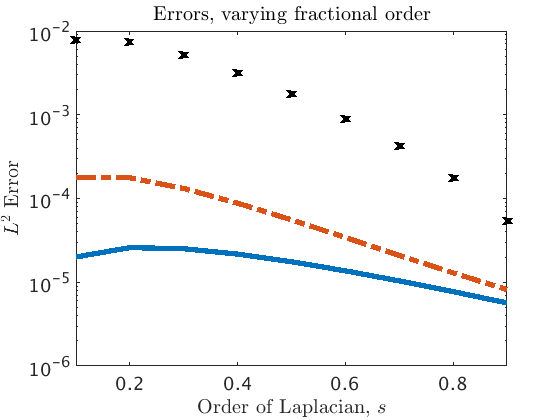} 
 \includegraphics[width=0.32\textwidth]{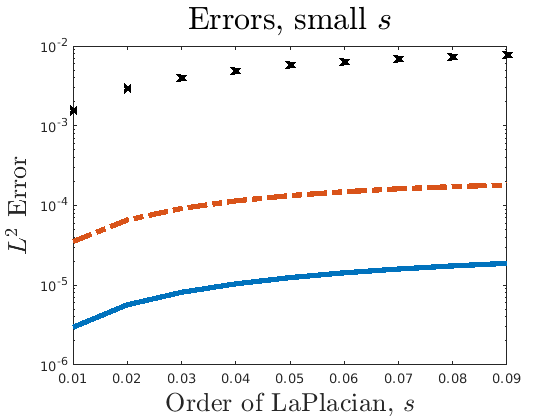}
\includegraphics[width=0.32\textwidth]{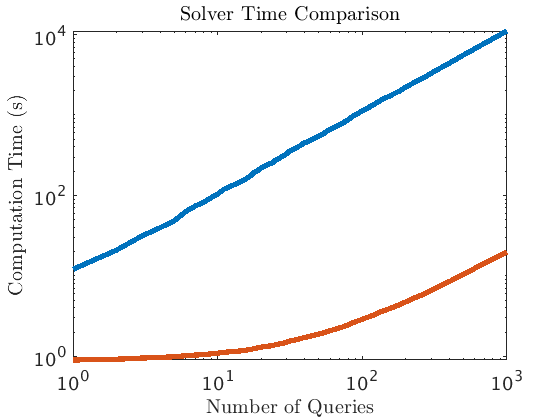}
\end{center}
\caption{\label{fig:timing}Accuracy of the \textsc{RBM} algorithm over a range of values of $s$ (left and center). The \textsc{Sine} example is plotted in a red dot-dashed, the \textsc{Mixed Modes} in a blue solid line, and the \textsc{Square Bump} case in black crosses. In the right pane we show the \textit{cumulative} computational time required by the \textsc{GQ} algorithm (blue) versus the \textsc{RBM} algorithm (red). Each query refers to an evaluation of the map $s \mapsto u(s)$. In particular this cumulative time for the \textsc{RBM} solver includes the one-time offline cost required by \textsc{OfflineFracLapRBM} in Algorithm \ref{alg:RBM}.}
\end{figure}


\section{Conclusion}
We propose a novel model reduction strategy for computing solutions to fractional Laplace PDE's, in particular \eqref{eq:fpde}. Our algorithm builds on the ideas introduced in \cite{bonito_numerical_2015}, improving accuracy and stability, and accelerating that algorithm considerably. Our model reduction strategy hinges on the fact that the solution to the fractional problem can be written in terms of classical, local elliptic PDE's, for which RBM-based model reduction is known to be efficient. 

We provide novel stability bounds for both the continuous and discrete problems, and our numerical experiments suggest that our Gaussian quadrature approach is more efficient than alternative quadrature methods. All of our algorithmic and theoretical results apply to solutions to differential equations involving fractional powers of general elliptic operators. A rigorous proof of the convergence for our quadrature rule is the subject of ongoing study.

\bibliography{dtRbmBibHD,references} 
\bibliographystyle{plain}

\appendix 

\end{document}